\newtheorem{theo}{Theorem}[section]
\newtheorem{prop}[theo]{Proposition}  
\newtheorem{coro}[theo]{Corollary}
\newtheorem{rema}[theo]{Remark}
\newtheorem{lema}[theo]{Lemma}
\newtheorem{defi}[theo]{Definition}
\newcommand{\zdarz}{X \textrm{ is 1/2-bad}}
\newcommand{\miu}{\mu_{I'}(i)}
\newcommand{\epp}{\va(I',X)}
\newcommand{\rod}{{\ccB_N}}
\newcommand{\all}{\mathcal{V}_N}
\def\be#1\ee{\begin{equation}#1\end{equation}}
\newcommand{\ba}{\begin{eqnarray} }
\newcommand{\ea}{\end{eqnarray} }
\newcommand{\eps}{\varepsilon}
\def\bt#1\et{\begin{theo}#1\end{theo}}
\def\bl#1\el{\begin{lema}#1\end{lema}}
\def\bp#1\ep{\begin{prop}#1\end{prop}}
\def\bd#1\ed{\begin{defi}#1\end{defi}}
 \global\long\def\cbr#1{\left\{  #1\right\}  }
 \global\long\def\rbr#1{\left(#1\right)}
\def\ccA{{\cal A}} 
\def\ccB{{\cal B}}
\def\ccE{{\cal E}}
\def\ccF{{\cal F}}
\def\ccG{{\cal G}}
\def\ccH{{\cal H}}
\def\ccI{{\cal I}}
\def\ccJ{{\cal J}}
\def\ccM{{\cal M}}
\def\ccN{{\cal N}}
\def\ccY{{\cal Y}}
\def\ccX{{\cal X}}
\def\va{\varepsilon}
\def\ra{\rightarrow}
\def\E{\mathbf{E}}
\def\P{\mathbf{P}}
\def\N{{\mathbb N}}
\def\R{{\mathbb R}}
\def\leq{\leqslant}
\def\geq{\geqslant}
\def\sm{\setminus}
\def\1{{\mathbbm 1}}
\providecommand{\keywords}[1]{\textbf{\textit{Keywords: }} #1}
\providecommand{\klas}[1]{\textbf{\textit{AMS MSC 2010: }} #1}
\begin{document}

\title{\bf The suprema of selector processes with the application to positive infinitely divisible processes\footnote{The authors were supported by National Science Centre, Poland grants 2019/35/B/ST1/04292 (W. Bednorz) and 2021/40/C/ST1/00330 (R. Meller). }}
\author{Witold Bednorz\footnote{Institute of Mathematics, University of Warsaw, Banacha 2, 02-097 Warsaw, Poland.},  Rafa\l{} Martynek\footnote{Institute of Mathematics, University of Warsaw, Banacha 2, 02-097 Warsaw, Poland.} and Rafa\l{} Meller\footnote{Institute of Mathematics, Polish Academy of Sciences, Śniadeckich 8,
00-656 Warsaw, Poland and Institute of Mathematics, University of Warsaw, Banacha 2, 02-097 Warsaw, Poland.}
}

\maketitle

\begin{abstract}
We provide an alternative proof of the recent result of Park and Pham \cite{Park} on the expected suprema of the positive selector and empirical processes, which we find useful in further generalizations we discuss. As an application, we obtain a result concerning positive infinitely divisible processes. \\
\keywords{expected suprema, positive processes, $p$-smallness.} \\
\klas{60G15, 60G17.}
\end{abstract}

\section{Introduction}
Recent results (see \cite{TG3}, \cite{Bed}) show that the size of the expected suprema of both empirical and infinitely divisible processes can be explained by decomposing the initial index set into a sum of two new index sets. The size of the process defined on one of them is characterized by Talagrand's gamma functionals, while the process defined on the other index set is a positive process. An understanding of the size of positive processes was lacking and some conjectures were made by Talagrand (see \cite{TG1, TG3}). Some of them have recently been discussed and proved in \cite{Park}.
\noindent The key new concept was the notion of a fragment, which was used in the proof of the Kahn-Kalai conjecture cf. \cite{Park}. We adopted this idea and managed to obtain a much simpler proof than the original one presented in \cite{Park}. Our proof is also more instructive for further generalizations. The primary research goal which motivates an alternative proof of Park and Pham result is to provide a small cover for processes $(Y_t)_{t\in T}$ for $T\subset\R_{+}^d$ and $Y_t=\sum_{i=1}^d t_iY_i$, where $Y_i$'s are non-negative, independent, but not necessarily identically distributed. The case of i.i.d. variables  is already a highly non-trivial result which is presented in \cite{up} (see also the end of Section $2$ below for the discussion of a result which goes beyond i.i.d. case and is also presented in \cite{up}). From this point of view, selector processes should be seen as a crucial, initial case one has to understand.  Interestingly, in the recent paper by Dubroff, Kahn and Park \cite{Kahn} the authors use our Key Lemma (Lemma \ref{zly} below) to establish a result concerning so-called expectation threshold and the fractional expectation threshold. Moreover, they call our result the quantitative strengthening of the result in \cite[Theorem 1.5]{Park} and use it for a very elegant computation in the direction of one of Talagrand's conjectures (\cite[Conjecture 6.3]{TG1}.

\noindent The overall goal is to relate the size of the stochastic process to the geometry of the index set. This leads to the very efficient method of chaining and, in turn, to characterization in terms of gamma functionals. The most famous result in this area is the Majorizing Measure Theorem (see e.g. \cite[Theorem 2.10.1]{TG3}) for Gaussian processes. One of the geometric consequences of this result is that the construction of the partition of the index set also provides, in a sense, "witnesses" to the size of the supremum. Let us briefly explain this concept (cf. the discussion after Theorem 2.12.2 and Section 13.2 in \cite{TG3}). Let $(G_t)_{t\in T}$ be a centred Gaussian process on some index set $T$.  By the Majorizing Measure Theorem one can find a jointly Gaussian sequence $(u_k)_{k\geq1}$ such that for some universal constant $L>0$ it holds that
$$\left\{\sup_{t\in T}|G_t|\geq L\E\sup_{t\in T}|G_t|\right\}\subset \bigcup_{k\geq 1}\left\{u_k\geq 1\right\}$$
and $$\sum_{k\geq1}\P(u_k\geq 1)\leq\frac{1}{2}.$$
Let us emphasize that the above statement is weaker than the Majorizing Measure Theorem in the sense that it follows relatively easily from it, but the other direction is not at all clear. The basis of the chaining method is a result of Sudakov's type bounds, which guarantees that if the points in the index set are well separated, then one can control the expected supremum of the process from below. Since we do not have such a result in the context of the selector process, chaining seems infeasible in this case. Therefore, we want to characterize $\sup_{t\in T} X_t$ of certain non-negative stochastic processes in the sense of the above representation.\\
\noindent As mentioned before, the main motivation for simplifying the proof of Theorem \ref{main1} is to find new methods of bounding $\E\sup_{t\in T} X_t$, where $(X_t)_{t\in T}$ is a nonnegative process. Due to the use of multisets in \cite{Park} in the case of empirical processes, it is difficult to notice how this method can serve in different settings. The main difference between our approach and the original proof in \cite{Park} is the method of counting of so-called bad sets. In particular, we introduce a kind of a threshold function (see Definition \ref{jotyiepsilony} and the beginning of the proof of Lemma \ref{lematepsilon}), which turns out to be an appropriate tool for generalizations. As an example, we give a result for permutationally invariant processes in the Appendix. Although the further generalizations in \cite{up} are much more complicated than the proof for selector processes, the appropriate version of the threshold function plays a crucial role there. \\
\noindent

\section{Results and motivation}\label{resul}
The letters $K,C$ stand for universal constants. We adopt the convention that $K$ may differ at each occurrence.
As we mentioned, the first goal is to present a simplified proof of the result in \cite{Park} concerning a positive selector process. 
Consider a sequence $(\delta_i)_{i=1}^{n}$ of independent Bernoulli random variables with 
$$\P(\delta_i=1)=p=1-\P(\delta_i=0)$$ 
for some $p\in(0,1)$ and a set $T$ of sequences $t=(t_i)_{i=1}^{n}$ with $t_i\geq0$. We assume that $T$ is finite. Let 
$$\delta(T):=\E\sup_{t\in T}\sum_{i=1}^{n}t_i\delta_i.$$
We use the notation $[n]:=\{1, 2, \dots, n\}$. To describe the size of $\delta(T)$, we want to construct "explicit witnesses" of the event
$$\left\{\sup_{t\in T}\sum_{i=1}^{n}t_i\delta_i\geq K\delta(T)\right\}$$
whose measure is small. To formalize this goal let us recall the notions of up-sets and $p$-smallness. 
Let $I\subseteq[n]$. Then the \textit{up-set generated by $I$} is defined as $$\langle I\rangle:=\left\{J\subseteq[n]: J\supseteq I\right\},$$ 
and we say that the \textit{collection $\ccF$ of subsets of $[n]$ is $p$-small} if there exists a collection $\ccG$ of subsets of $[n]$ such that 
$$\ccF\subseteq\langle\ccG\rangle:=\bigcup_{I\in\ccG}\langle I\rangle,$$
and
$$\sum_{I\in\ccG}p^{|I|}\leq\frac{1}{2}.$$
We refer to $\ccG$ as the cover of $\ccF$.
We prove the following result describing the size of $\delta(T)$.
\begin{theo}\label{main1}
The family
\be\label{fam}
\left\{I\subset [n]: \sup_{t\in T}\sum_{i\in I} t_i\geq K\delta(T)\right\}
\ee
is $p$-small. One may take $K=221$.
\end{theo}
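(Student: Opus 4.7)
The plan is to follow an LP-duality / spread-lemma strategy in the spirit of Park and Pham's approach \cite{Park}. Assume for contradiction that the family $\ccF := \{I \subseteq [n] : \sup_{t\in T}\sum_{i\in I} t_i \gs L\delta(T)\}$ is not $p$-small. I would first translate this non-smallness into a dual statement: by LP duality applied to the natural covering linear program defining $p$-smallness, there exists a probability measure $\nu$ supported on $\ccF$ such that
\be
\nu(\langle S\rangle) \ls 2 p^{|S|} \qquad \text{for every } S\subseteq [n]. \label{spread}
\ee
In other words, $\nu$ is ``spread'' relative to the Bernoulli$(p)^n$ product measure, and this spread property serves as the foothold for the rest of the argument.

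With $\nu$ in hand, the goal is to derive two conflicting estimates for $\E_{I\sim\nu}\sup_{t\in T}\sum_{i\in I}t_i$. On the one hand, since $\nu$ is supported on $\ccF$, integrating the defining inequality gives $\E_{I\sim\nu}\sup_{t\in T}\sum_{i\in I} t_i \gs L\,\delta(T)$. On the other hand, using \eqref{spread} together with an iterated Bernoulli thinning, I would argue that $\nu$ can be compared with a Bernoulli product measure of slightly larger parameter. Concretely, take $k$ independent copies $\delta^{(1)},\dots,\delta^{(k)}$ of the selector vector, set $J=\bigcup_{j=1}^{k}\{i: \delta^{(j)}_i=1\}$, and show that
\be
\E_{I\sim\nu}\sup_{t\in T}\sum_{i\in I}t_i \ls C\, \E \sup_{t\in T}\sum_{i\in J} t_i \ls Ck\, \delta(T),
\ee
where the last inequality uses the subadditivity of the supremum along the $k$-fold decomposition $\sum_{i\in J}t_i \ls \sum_{j=1}^k \sum_{i:\delta_i^{(j)}=1} t_i$ (valid because $t_i\gs0$), so each piece contributes at most $\delta(T)$. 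Comparing the two bounds contradicts the non-$p$-smallness assumption as soon as $L>Ck$, and careful bookkeeping of the constants is what should ultimately produce $L=221$.

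The main technical obstacle lies in the comparison step $\E_{I\sim\nu}\sup_{t\in T}\sum_{i\in I}t_i \ls C\, \E \sup_{t\in T}\sum_{i\in J} t_i$. The dual measure $\nu$ is constrained only by the ``principal'' up-set bounds \eqref{spread}, whereas the quantity we need to control is the expectation of a general monotone functional of $I$. Passing from the former to the latter is precisely where a quantitative spread / sunflower argument---in the vein of Alweiss--Lovett--Wu--Zhang and its refinement by Park--Pham---must be invoked. The nonnegativity of the $t_i$ ensures that $I \mapsto \sup_{t\in T}\sum_{i\in I}t_i$ is coordinatewise monotone, which is what makes this comparison feasible; a layer-cake decomposition combined with a level-by-level application of the spread machinery is the route I would take to make it quantitative.
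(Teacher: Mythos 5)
Your very first step is where the argument breaks. Non-$p$-smallness is a statement about \emph{integral} covers: there is no collection $\ccG$ with $\ccF\subseteq\langle\ccG\rangle$ and $\sum_{S\in\ccG}p^{|S|}\ls\frac12$. LP duality, however, speaks about the \emph{fractional} covering problem: the existence of a probability measure $\nu$ on $\ccF$ with $\nu(\langle S\rangle)\ls 2p^{|S|}$ for all $S$ is equivalent (by duality) to the non-existence of a fractional cover of weight $\ls\frac12$. To pass from ``not $p$-small'' to such a spread measure you would need ``integrally not small $\Rightarrow$ fractionally not small (up to constants)'', i.e.\ that a fractional cover can be converted into an integral one at the cost of a constant in $p$. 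That is precisely Talagrand's still-open conjecture comparing the expectation threshold with the fractional expectation threshold; neither Frankston--Kahn--Narayanan--Park nor Park--Pham resolves it. So as written your contradiction scheme either assumes an open conjecture, or — if you start instead from ``no fractional cover'' — it can only ever yield the weaker conclusion that the family in \eqref{fam} is \emph{weakly} $p$-small, not the genuine $p$-smallness with an explicit integral cover asserted in Theorem \ref{main1}.

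There is a second, independent gap: even granting the spread measure, the comparison $\E_{I\sim\nu}\sup_{t\in T}\sum_{i\in I}t_i\ls C\,\E\sup_{t\in T}\sum_{i\in J}t_i$ is exactly the hard part, and the machinery you invoke for it (ALWZ-type sunflower/spread arguments) gives containment statements with logarithmic losses in general; removing the logarithm in this weighted setting is the whole content of the Park--Pham selector theorem, and it is not obtained by iterated Bernoulli thinning of a spread measure. The paper's route is different and entirely elementary: Theorem \ref{main1} is restated as Theorem \ref{main2}, and the heart is the Key Lemma \ref{zly}, proved by a direct counting argument. One defines $c$-bad sets via \eqref{bad}, attaches to each $(I,X)$ a threshold index $j(I,X)$ (Lemma \ref{lema1}) and a ``witness'' $W(I,X)$ (Definition \ref{Fragment}); the sets $W(I,X)\setminus X$ form an explicit integral cover of $\ccF$, and Lemmas \ref{pom}, \ref{zaw} and Corollary \ref{zlicz} bound the number of repeated witnesses, which through \eqref{nier1} bounds $\P(X\textrm{ is }1/2\textrm{-bad}\mid |X|=m)$ by $\sum_{t\gs1}(4np/m)^t$. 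The theorem then follows by the splitting $\delta_i=\delta_i'\delta_i''$ and conditioning on $|X|$. If you want to salvage your plan, you would need either a proof of the fractional-versus-integral comparison (open) or a fragment/witness-type counting argument of the above kind replacing both your duality step and your black-box comparison.
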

\begin{rema}\label{malep}
Clearly $\sup_{t\in T}\sum_{i=1}^n t_i\delta_i\leq \sup_{t\in T}\sum_{i=1}^n t_i\leq \frac{1}{p} \delta(T)$. Thus the family in \eqref{fam} is empty for $K>\frac{1}{p}$. This shows that Theorem \ref{main1} is interesting only when $p$ is small.
\end{rema}
\noindent We would like to point out that the proof of Theorem \ref{main1} is existential, we do not construct this family explicitly. This is a  common situation in this field. Usually, the constructive results are derived only in some special cases. The best example is the Talagrand's Majorizing Measure Theorem for which explicit construction of the majorizing measure is known only for simplexes and ellipses (and a few exotic examples). 
\noindent The main new consequence that we deduce from Theorem \ref{main1} is the analogous result for positive infinitely divisible processes (see \cite[Definition 12.3.4]{TG3}). Notice that following \cite[Chapter 12]{TG3} we take Rosi\'nski's  series representation of infinitely divisible processes as their definition which uses a Poisson point process (see e.g. \cite[Section 12.1]{TG3}).\\ 
\noindent Consider a finite set $T$ and a measurable space $\R^T$, provided with the cylindrical $\sigma$-algebra
\[ \sigma \left(\{x\in \R^T: x(t)\in A\}:t\in T, A\in \mathcal{B}(\R) \right), \]
i.e. $\sigma$-algebra generated by the coordinate functions. Let $\nu$ be a $\sigma$-finite measure on $\R^T$ with $\nu(0)=0$. Assume that for any $t\in T$  $$\int_{\R^T} |x(t)|\wedge 1 d\nu(x)<\infty,$$
where $\wedge$ is the minimum of two values. The main additional assumption we make about the measure $\nu$ is that it has no atoms. Consider a Poisson point process $(Y_i)_{i\geq 1}$ on $\R^{T}$ 
with intensity measure $\nu$. Then we define \textit{a positive infinitely divisible process with L\'evy measure $\nu$} as
\[
|X|_t:=\sum_{i\geq 1}|Y_i(t)|.
\] 
The standard simplification is to consider $T$ as a space of functions on $\R^T$ by associating the element $t\in T$ with the function $x\mapsto x(t)$ on $\R^T$, so that we can write $t(Y_i)$ instead of $Y_i(t)$.
Thus, we have
\[\E\sup_{t\in T}|X|_t=\E\sup_{t\in T}\sum_{i\geq 1}|t(Y_i)|.
\]

\bt \label{twpod}
Let $|X|_t$ be a positive, infinitely divisible process with a non-atomic L\'evy measure $\nu$, indexed by a finite set $T$. Then there exists a family $\ccF$ of pairs $(g,u)$, where each $g:\R^{T}\ra \R_{+}$
is measurable (with respect to the cylindrical $\sigma$-algebra) and $u\geq 0$ such that
\begin{equation}\label{event}
\cbr{\sup_{t\in T}|X|_t \geq K\E\sup_{t\in T}|X|_t  }\subset \bigcup_{(g,u)\in \ccF}\{|X|_g\geq u\},
\end{equation} 
and
\begin{equation}\label{delta}
\sum_{(g,u)\in \ccF}\P(|X|_g\geq u)\leq \frac{1}{2}.
\end{equation}
\et
\noindent Obviously, by the Markov inequality, the probability of the event described in \eqref{event} is less than $1/K$. But the goal is to find "explicit witnesses" that this event is small.

\noindent The third setting that we work with is the one of positive empirical processes. 
Although we must assume that the underlying measure $\mu$ is non-atomic, we find it instructive to see that the result can be obtained without the use of multisets, unlike \cite{Park}. We believe this makes the application of the result for the selectors more transparent. For the result in full generality we refer to \cite{Park}.

\begin{theo}\label{twemp}
Let $(E,\rho)$ be some metric space, $d>0$ be a fixed integer. Suppose $\mu$ is a probability measure on $E$ with no atoms.  Let $X_1, \dots, X_d$ be independent random variables distributed according to $\mu$. Consider $T$, a finite class of nonnegative Borel measurable functions on $E$.  Then there exists a family $\ccF$ of pairs $(g,u)$ where each $g:E\ra \R_{+}$
is Borel measurable and $u\geq 0$ such that
\begin{equation}\label{eventem}
\cbr{\sup_{t\in T}\sum_{i=1}^d t(X_i)\geq K\E\sup_{t\in T}\sum_{i=1}^d t(X_i)}\subset \bigcup_{(g,u)\in \ccF}\cbr{\frac{1}{d}\sum_{i=1}^d g(X_i)\geq u}
\end{equation} 
and
\begin{equation}\label{deltaem}
\sum_{(g,u)\in \ccF}\P\left(\frac{1}{d}\sum_{i=1}^d g(X_i)\geq u\right)\leq \frac{1}{2}.
\end{equation}
\end{theo}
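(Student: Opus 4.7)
The plan is to reduce Theorem \ref{twemp} to the selector case, Theorem \ref{main1}, by a discretization-plus-coupling argument. Using that $\mu$ is atomless, for every large $N$ we partition $E$ into measurable cells $A_1,\ldots,A_N$ with $\mu(A_j)=1/N$; eventually $N\ra\infty$. Introduce independent auxiliary random variables $Z_1,\ldots,Z_N$ with $Z_j$ distributed as the normalized restriction $\mu(\,\cdot\,|A_j)$, and an independent uniformly random $d$-element subset $I\subset[N]$. A direct verification shows that $\cbr{Z_j:j\in I}$ and $\cbr{X_1,\ldots,X_d}$ have the same distribution up to a collision event of probability $O(d^2/N)$, so under this coupling we identify $\sup_{t\in T}\sum_{i=1}^d t(X_i)$ with $\sup_{t\in T}\sum_{j\in I} t(Z_j)$.

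Conditionally on $(Z_j)=(z_j)$, I would apply Theorem \ref{main1} to the deterministic sequence $t_j=t(z_j)$ with Bernoulli parameter $p=d/N$. This yields a cover $\ccG(z)\subseteq 2^{[N]}$ with $\sum_{S\in\ccG(z)}p^{|S|}\ls 1/2$ such that $\sup_t\sum_{j\in I}t(z_j)\gs L\,\delta_z(T)$ forces $I\in\langle\ccG(z)\rangle$, where $\delta_z(T):=\E\sbr{\sup_t\sum_j t(z_j)\delta_j}$ with i.i.d.\ Bernoulli($p$) variables $\delta_j$. The subadditivity of $k\mapsto\E\sup_t\sum_{i=1}^k t(X_i)$ and $\E\sum_j\delta_j=d$ give $\E\delta_Z(T)\ls 2S$, hence by Markov $\delta_Z(T)\ls 4S$ on an event of probability at least $1/2$; the uniform-$d$-subset bound $\P(I\supseteq S)\ls(d/N)^{|S|}=p^{|S|}$ then shows that $\P(I\in\langle\ccG(Z)\rangle\,|\,Z)\ls 1/2$ as well.

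The candidate family $\ccF$ consists of pairs $(g_S,u_S)$ with $g_S=\sum_{j\in S}\1_{A_j}$ and $u_S=|S|/d$, indexed by the subsets $S\subset[N]$ appearing in some $\ccG(z)$. The required inclusion $\cbr{I\supseteq S}\subseteq\cbr{\tfrac 1d\sum_{i=1}^d g_S(X_i)\gs u_S}$ holds because $I\supseteq S$ forces every cell of $S$ to be hit by at least one $X_i$, so $\sum_i g_S(X_i)\gs|S|$.

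The hard part will be bounding the total probability $\sum_{S}\P\!\rbr{\tfrac 1d\sum_i g_S(X_i)\gs u_S}\ls 1/2$. Since $\sum_i g_S(X_i)$ is $\text{Bin}(d,|S|/N)$-distributed, the probability on the left is at most $(ep)^{|S|}$, whereas Theorem \ref{main1} only delivers $\sum_S p^{|S|}\ls 1/2$, which leaves a factor $e^{|S|}$ unaccounted for. To close the argument one must either sharpen the substitute event for $\cbr{I\supseteq S}$ so that its probability matches the combinatorial bound $p^{|S|}$ up to a universal constant per element, or apply Theorem \ref{main1} with a rescaled parameter and reverify that the corresponding $\delta_Z^{(p/e)}(T)$ is still $\ls CS$ on average; taking $N\ra\infty$ then aligns the uniform-$d$-subset and independent Bernoulli selections and makes the family $\ccF$ genuinely deterministic.
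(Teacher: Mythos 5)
Your overall direction---discretize $E$, reduce to the selector theorem, and handle the fact that exactly $d$ points are sampled by comparing with a uniform $d$-element subset---is the same general strategy as the paper, but the proposal has two genuine gaps. The first you name yourself: the factor $e^{|S|}$ between the binomial tail bound $\binom{d}{|S|}(|S|/N)^{|S|}\le (ep)^{|S|}$ and the budget $\sum_S p^{|S|}\le 1/2$ that Theorem \ref{main1} delivers. This is not a loose end one can defer; it is precisely why the paper proves the rescaled and conditional variants of the selector result (Remark \ref{rem-mniejszy-zbior}, Theorem \ref{master}, Theorem \ref{malarodzina} and Proposition \ref{remark1}): by paying a larger constant in the threshold, the bad family of $d$-subsets is shown to be $\frac{Cd}{n}$-small rather than $\frac{d}{n}$-small, and the slack factor $C$ (taken $4e$ in the paper) is exactly what absorbs the loss $\binom{d}{|G|}(p|G|)^{|G|}\le (edp)^{|G|}$ in the final binomial estimate. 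Your ``either/or'' sentence gestures at this fix but does not carry it out; the rescaling must be done at the level of the selector theorem (worse $K$, larger smallness parameter), not by hoping a modified $\delta_Z(T)$ is still comparable to $S$.

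The second gap is structural and unaddressed: your cover $\ccG(z)$ is built conditionally on the random representatives $Z$, and the comparison $\delta_Z(T)\le 4S$ holds only on an event of probability at least $1/2$. The theorem demands a single deterministic family $\ccF$ and a sure inclusion \eqref{eventem}; a cover that is random in $Z$ and valid only on part of the probability space does not yield one, the exceptional event $\{\delta_Z(T)>4S\}$ is neither of the form $\{\frac{1}{d}\sum_i g(X_i)\ge u\}$ nor affordable within the total budget $1/2$, and taking the union of covers over all values of $z$ destroys \eqref{deltaem}; ``letting $N\to\infty$'' does not make the family deterministic. The paper avoids this by discretizing the \emph{values} of the functions (the level sets $\ccA_N$ with deterministic coefficients $t(A)$) and then splitting each level set into pieces of $\mu$-measure $p$, so the reduced selector problem has deterministic coefficients and the resulting witnesses are deterministic. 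Finally, your bound $\E\,\delta_Z(T)\le 2S$ is not immediate from subadditivity, since the selected $Z_j$ occupy distinct cells and are not i.i.d.\ from $\mu$; a comparison of the type of Lemma \ref{porsup} and \eqref{X-Y} in the paper is needed there, though that issue is minor compared with the two above.
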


\noindent The metric structure is irrelevant for the formulation of the theorem.
However, it is crucial for our proof. Roughly speaking, we need to decompose set $A \subset E$ into many subsets of almost equal measure (see Step~3 of
the proof). It is not clear whether such a construction is possible in non-metric spaces.

\noindent As we show in \cite{up}, the assumption that the measures have no atoms is not necessary in both Theorems \ref{twpod} and \ref{twemp}. We also suspect that the assumption that $T$ is finite in Theorems \ref{twpod} or \ref{twemp} is redundant, but it is unclear how to extend the results to infinite index sets.\\
\noindent We finish this section with a short discussion of a possible direction for the future work. In \cite{up} we manage, by using techniques developed in this paper, to provide a small cover for
\begin{equation}\label{conj1}
\cbr{\sup_{f\in T}\sum_{i=1}^d f_i(X_i) \geq K\E\sup_{f\in T}\sum_{i=1}^d f_i(X_i)},
\end{equation}
where $X_1,\ldots,X_d$ are non-negative i.i.d random variables and $T$ is a finite set of $d$-tuples of non-negative functions. We believe that this result can be of use in some other important problems. Let us briefly discuss one of them. The problem of finding tight bounds on the operator norm of random matrices is of great importance in probability theory and functional analysis. The most important case of non-homogeneous Gaussian matrices has been solved by R. van Handel, R. Lata{\l}a, and P. Youssef cf. \cite{Lat}. But the equally important question about non-homogeneous Rademacher matrices is still open.  The task is to find a two-sided bound for the quantities 
 \begin{equation}\label{loca12}
   \E \sup_{x,y\in B^n_2} \sum_{ij} a_{ij} x_i y_j \eps_{ij} = \E \sup_{x\in B^n_2} \sqrt{\sum_i (\sum_j  a_{ij}x_j\eps_{ij})^2}\approx\sqrt{\E \sup_{x\in B^n_2} \sum_i (\sum_j  a_{ij}x_j\eps_{ij})^2}  
 \end{equation}
where $a_{ij}$ are real numbers and $\eps_{ij}$ are independent, symmetric $\pm1$ random variables. The quantity $\sup_{x\in B^n_2} \sum_i (\sum_j  a_{ij}x_j\eps_{ij})^2$ can easily be expressed as $\sup_{f\in T}\sum_{i=1}^d f_i(X_i)$. The existence of a small cover of the set (\ref{conj1}) may result in a better understanding of the problem of estimating the operator norm of Rademacher matrices. In particular, it may lead to a counterpart of Theorem \ref{main2} in this case, potentially providing new results.For a broader discussion of estimates of \eqref{loca12}, we refer the reader to \cite{latop,opnormber}; see also \cite{Meller} for the case of more general random variables.\\
\noindent  The organization of the paper is the following. In the next three sections, we prove, respectively, Theorem
\ref{main1}, result on infinitely divisible processes and on empirical processes. Two of them are present in
\cite{Park} while the one concerning positive infinitely divisible processes is our main application of the result
about selector processes. In the Appendix we present the result for processes invariant under permutations of the index set, which aims to show how the main argument can be adjusted for further applications.

\section{Positive selector process}
Theorem \ref{main1} can be reformulated as follows (see the proof of Theorem \ref{malarodzina} below or \cite[Section 2.1]{Park}).

\begin{theo}\label{main2}
Let $\ccF$ be a family of subsets of $[n]$ which is not $p$-small. Suppose that for each $I\in\ccF$ we are given coefficients 
$(\mu_{I}(i))_{i\in I}$ with $\mu_I(i)\geq0$ and 
$$\mu_I(I):=\sum_{i\in I}\mu_I(i)= 1.$$ Then 
$$\E\sup_{I\in\ccF}\sum_{i\in I}\mu_{I}(i)\delta_i\geq\frac{1}{220}.$$
\end{theo}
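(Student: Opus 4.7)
The plan is to argue by contrapositive: starting from the assumption $\E \sup_{I \in \ccF} \sum_{i \in I} \mu_I(i) \delta_i < 1/220$, I would construct an explicit cover $\ccG$ of $\ccF$ with $\sum_{S \in \ccG} p^{|S|} \ls 1/2$, thereby witnessing that $\ccF$ is $p$-small. The cover will consist of subsets $S \subseteq I$ for various $I \in \ccF$, chosen so that each $I \in \ccF$ is contained in at least one $S \in \ccG$; the challenge is to keep $|S|$ moderate so that $p^{|S|}$ stays manageable, since for small $p$ the trivial choice $S = I$ is useless.

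First I would reduce to a setting of approximately uniform weights via dyadic decomposition. For each $I \in \ccF$ and $k \gs 0$, set $I^{(k)} := \{i \in I : 2^{-k-1} < \mu_I(i) \ls 2^{-k}\}$; since $\sum_k \mu_I(I^{(k)}) = \mu_I(I) = 1$, there exists a smallest $k(I)$ with $\mu_I(I^{(k(I))}) \gs \alpha_{k(I)}$ for a fixed summable sequence such as $\alpha_k = 6/(\pi^2(k+1)^2)$. Within the distinguished layer all weights differ by at most a factor of $2$, so $\sum_{i \in I^{(k)}} \mu_I(i) \delta_i$ is comparable to $2^{-k} |I^{(k)} \cap W|$, where $W = \{i : \delta_i = 1\}$. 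This reduces the weighted problem on each layer to one of controlling cardinalities $|I^{(k)} \cap W|$, which is amenable to standard Chernoff-type concentration.

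Next I would construct the cover iteratively, one layer at a time. Fix $k$ and let $\ccF_k := \{I \in \ccF : k(I) = k\}$. The hypothesis combined with Markov's inequality limits the expected number of $I \in \ccF_k$ for which $|I^{(k)} \cap W|$ exceeds a threshold of order $\alpha_k 2^k$. A greedy loop then proceeds as follows: sample a realization of $W$, and for each $I$ remaining in the to-do list whose $|I^{(k)} \cap W|$ is large enough, add a subset of $I^{(k)} \cap W$ of size of order $1/p$ to $\ccG$ and remove $I$ from the to-do list. Iterating over fresh realizations, the weight contribution of layer $k$ to $\sum_{S \in \ccG} p^{|S|}$ can be bounded by an expected hit count that decays geometrically across rounds, and the total across all $k$ is summable thanks to the choice of $\alpha_k$.

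The main obstacle will be quantifying the constants to obtain the clean bound $1/2$ for the cover weight, and consequently the explicit value $L = 220$ in Theorem \ref{main1}. Three sources of loss have to be balanced simultaneously: the dyadic loss from the summable sequence $\alpha_k$, the factor-$2$ loss in the layer-wise reduction from weights to cardinalities, and the multiplicative loss incurred at each round of the greedy construction. A careful and fairly tight use of Chernoff bounds for the binomial random variables $|I^{(k)} \cap W|$ is needed to ensure these losses combine to the advertised constant, and a delicate choice of the threshold in terms of $\alpha_k$ will be required so that no single layer contributes more than its fair share of the permitted total weight $1/2$.
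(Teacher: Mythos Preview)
There is a genuine gap in your plan: you do not explain how the total weight $\sum_{S \in \ccG} p^{|S|}$ is controlled, and the sketch as written cannot deliver such a bound. Your greedy loop adds, for each $I$ that is eventually covered, one set $S_I$ to $\ccG$; since every $I \in \ccF$ must be covered, the cover has up to $|\ccF|$ elements, and $|\ccF|$ can be of order $2^n$. To force $\sum_S p^{|S|} \ls 1/2$ you would then need $|S| \gtrsim n/\log(1/p)$, but your sets $S \subseteq I^{(k)} \cap W$ have size at most $|W| \approx pn$, which is far too small. The phrase ``of size of order $1/p$'' is not dimensionally sensible here ($1/p$ may exceed $n$), and the claim that the hypothesis ``limits the expected number of $I \in \ccF_k$'' that are hit is not what the hypothesis gives: a bound on $\E \sup_I$ says that with high probability \emph{no} $I$ is hit, not that few are. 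Your ``geometric decay across rounds'' can only refer to the number of \emph{uncovered} $I$'s, but the total number of sets added over all rounds is still $|\ccF|$, one per $I$; nothing in the sketch forces distinct $I$'s to share a common witness.

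That sharing mechanism is precisely the missing idea, and it is the core of the paper's proof. The paper does not run a greedy random loop; instead, for a fixed bad set $X$ it defines for each $I$ a witness $W(I,X)=I'_{j(I',X)}$ by passing to the $j$ indices of $I'$ with the largest weights (Lemma~\ref{lema1}---no dyadic decomposition is needed, the threshold $\va(I,X)$ handles arbitrary weights directly) and then \emph{minimising} $j$ and $|I'_j\setminus X|$ among all $I'$ with $I'_{j}\setminus X\subseteq I\setminus X$ (Definition~\ref{Fragment}). The crux is a combinatorial count (Lemmas~\ref{pom}, \ref{zaw} and Corollary~\ref{zlicz}): once $Z=I'_j\cup X$, $j$, and $t=|I'_j\setminus X|$ are fixed, there are at most $\binom{j}{t}$ distinct witnesses. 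Summing $p^t\binom{j}{t}$ over the admissible $(Z,j,t)$ and comparing with $\binom{n}{m}$ yields the Key Lemma~\ref{zly}; the cover is never enumerated $I$ by $I$. Your proposal contains no analogue of this minimality-and-counting step, and without it the argument cannot close.
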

\noindent Theorem \ref{main2} is obvious if constant $\frac{1}{220}$ is replaced by $p$, where $p=\P(\delta_i=1)$ (even without the essential assumption that $\ccF$ is not $p$-small). Thus it is interesting only for small values of $p$. (cf. Remark \ref{malep}).
\begin{rema}\label{pn}
Obviously, empty set does not contribute to the supremum over $I\in\ccF$, so we may assume that $\emptyset \notin \ccF$. For any such family $\ccF$ of subsets of $[n]$ we have $\ccF \subseteq \bigcup_{i\leq n} \langle \{i\} \rangle$. Thus, Theorem \ref{main2} is trivial when $np<1/2$ (in this case any family $\ccF$ is $p$-small). Thus, with no loss in generality, we may assume that $np\geq 1/2$.
\end{rema}
\noindent Our goal now is to prove Theorem \ref{main2}. To this end, we introduce the pivotal definition.
\begin{defi}
We say that a set $X\subseteq [n]$ is $c$-bad ($c>0$) if 
\begin{equation}\label{bad}
\sup_{I\in\ccF}\mu_I(X\cap I)<c.
\end{equation}
\end{defi}
\noindent By saying that $X\subseteq [n]$ is random we mean that
\begin{equation}\label{randomset}
X=\{i\in [n]: \delta_i'=1\},    
\end{equation}
where $(\delta_i')_{i\leq n}$ are independent and identically distributed Bernoulli variables with arbitrary probability of success. 
Notice that the distribution of $X$ conditioned on its cardinality equal to $m$ is uniform over $m$-element subsets of $[n]$. 
\noindent We deduce Theorem \ref{main2} from the following.
\begin{lema}[Key Lemma]\label{zly}
Let $\ccF$ be a family of subsets of $[n]$ which is not $p$-small. 
For any $m \leq n$ we have
\[\P(\zdarz \;\big|\; |X|=m) \leq \sum_{t=1}^n \left(4\frac{np}{m} \right)^t,\]
where a bad set is defined in \eqref{bad}.
\end{lema}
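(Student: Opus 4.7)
The plan is a double-counting argument exploiting the non-$p$-smallness hypothesis. For each $1/2$-bad set $X$ with $|X|=m$ and each $I\in\ccF$, the inequality $\mu_I(I\setminus X)>1/2$ lets me pick, by a fixed deterministic rule (for instance, the ``decreasing-weight greedy prefix'' in $I\setminus X$), a canonical minimal subset $S_I(X)\subseteq I\setminus X$ with $\mu_I(S_I(X))\ge 1/2$. The resulting family $\ccG(X)=\{S_I(X):I\in\ccF\}$ is a valid cover of $\ccF$, so non-$p$-smallness forces
\[\sum_{S\in\ccG(X)}p^{|S|}>\tfrac12\]
for every bad $X$ of size $m$.

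Averaging this strict inequality over $X$ uniform of size $m$, restricted to the bad event, yields
\[\tfrac12\,\P(\zdarz\mid|X|=m)\le \sum_{S\subseteq[n]} p^{|S|}\,\P\bigl(\zdarz,\,S\in\ccG(X)\,\bigm|\,|X|=m\bigr).\]
Since $S\in\ccG(X)$ requires $S\cap X=\emptyset$, each summand is at most $p^{|S|}\binom{n-|S|}{m}/\binom{n}{m}$. Grouping by $t=|S|$ reduces the task to bounding the number of distinct canonical witnesses of size $t$ that can appear in some $\ccG(X)$.

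The key combinatorial input is that this count is of order $4^t$: a minimal $\mu_I$-witness $S$ with $|S|=t$ has average weight at least $1/(2t)$, so its elements lie among the at most $2t$ ``heavy'' coordinates of $\mu_I$ (those with weight exceeding a threshold proportional to $1/(2t)$), giving at most $\binom{2t}{t}\le 4^t$ candidate subsets per $I$. The $|\ccF|$ factor that would naively appear in the double count is absorbed by applying non-$p$-smallness once per bad $X$ rather than once per pair $(X,I)$ and by passing to the antichain of minimal elements of $\ccG(X)$, so that witnesses shared across different $I$'s are counted once. Combining these estimates with the identity $\binom{n-t}{m}/\binom{n}{m}=\binom{m}{t}/\binom{n}{t}$ to convert the binomial ratio into the form $(n/m)^t$ gives the advertised bound $\sum_{t=1}^n(4np/m)^t$.

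The main obstacle is in the last paragraph: making the $|\ccF|$ factor cancel cleanly while keeping the witness count of order $4^t$ and converting a bare $p^t$ factor into $(np/m)^t$ requires a careful orchestration of the greedy rule, the antichain reduction, and the binomial identity. Any slack in the count of canonical witnesses, or any overcounting of pairs $(I,S)$, would either lose the geometric form of the bound or introduce spurious factors that do not match the stated $\sum_t (4np/m)^t$, so this is where the delicate part of the argument must lie.
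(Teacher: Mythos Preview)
Your overall architecture---build a cover $\ccG(X)$ of $\ccF$ for each bad $X$, invoke non-$p$-smallness, and double-count---matches the paper. The gap is exactly where you flag it, and it is real.

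The paper does \emph{not} fix a witness $S$ and then count $X$'s disjoint from it. With your parameterization the bound becomes $\sum_t N_t\,p^t\binom{n-t}{m}/\binom{n}{m}$, where $N_t$ is the total number of size-$t$ sets that ever arise as some $S_I(X)$. There is no reason $N_t$ should be $O(4^t)$: your ``top-$2t$ heavy coordinates'' claim is false (average weight $\ge 1/(2t)$ does not force every element to have weight $\ge 1/(2t)$), and even a correct per-$I$ bound would still leave an $|\ccF|$ factor that antichain reduction does not remove---distinct $I$'s can produce genuinely distinct witnesses. Also, your identity $\binom{n-t}{m}/\binom{n}{m}=\binom{m}{t}/\binom{n}{t}$ is incorrect (the right-hand side should be $\binom{n-m}{t}/\binom{n}{t}$), and in any case this ratio is roughly $((n-m)/n)^t$, not $(n/m)^t$.

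The paper's fix is to reparameterize the double sum by $Z:=W\cup X$, which has size $m+t$, rather than by the witness $W$ alone. The factor $\binom{n}{m+t}/\binom{n}{m}\le(n/m)^t$ is where the $(n/m)^t$ actually comes from. For this to work one needs: for fixed $Z$ (and fixed auxiliary integer $j$ with $t\le j\le 2t$), at most $\binom{j}{t}$ witnesses can occur. That bound is \emph{not} available for your greedy-prefix rule; it requires the paper's two-stage definition of the witness (first pick $I'\in\ccF$ minimizing a threshold index $j(I',X)$, then minimizing $|I'_{j}\setminus X|$, among all $I'$ with $I'_{j}\setminus X\subseteq I\setminus X$) together with Lemmas~3.7--3.9, which show that any two witnesses sharing the same $Z,j,t$ must have $W\setminus Y=W'\setminus Y$ and hence sit inside a common $j$-element set. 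Summing $\sum_{j=t}^{2t}\binom{j}{t}\le 4^t$ then gives the $4^t$ you were aiming for. Without this reparameterization and the tailored witness definition, the argument does not close.
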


\begin{proof}[Proof of Theorem \ref{main2}]
By Remark \ref{pn} we can assume that $np\geq1/2$. Thus we may further assume that $Cnp$ is an integer and  $9\leq C\leq 11$.
Observe that for each $i$ we can represent $\delta_i=\delta_i'\delta_i''$, where $(\delta_i')_{i=1}^{n},(\delta_i'')_{i=1}^{n}$ are independent Bernoulli random variables with $\P(\delta_i'=1)=Cp, \P(\delta_i''=1)=\frac{1}{C}$. 
By Jensen's inequality
\begin{align}
 \nonumber\E \sup_{I \in \ccF} \sum_{i \in I} \mu_I(i)\delta_i&=\E \sup_{I \in \ccF} \sum_{i \in I} \mu_I(i)\delta_i'\delta_i'' \\
&\geq \E \sup_{I \in \ccF} \sum_{i \in I} \mu_I(i)\delta_i'\E \delta_i''\geq\frac{1}{11}\E \sup_{I \in \ccF} \sum_{i \in I} \mu_I(i)\delta_i'.\label{ala123}
\end{align}
Lemma \ref{zly} applied for $m\geq Cnp$ gives
\begin{multline}
\E\left( \sup_{I\in \ccF} \sum_{i \in I} \mu_I(i) \delta_i' \mid \sum_{i=1}^n \delta_i'=m\right)\geq \frac{1}{2} \P\left(X\; \textit{is not 1/2-bad} \mid \sum_{i=1}^n \delta_i'=m \right) \label{fin1} \\
\geq \frac{1}{2}\left(1-\sum_{t=1}^n \left(4\frac{np}{m} \right)^t \right)    \geq  \frac{1}{2}\left(1- \frac{np}{m} \frac{4}{1-\frac{4np}{m}}\right)    \geq \frac{1}{2}\left(1-\frac{4}{C-4} \right)\geq \frac{1}{10}.
\end{multline}
So, by conditioning
\begin{align}
 \nonumber \E \sup_{I\in \ccF} \sum_{i \in I} \mu_I(i) \delta_i '&\geq \sum_{m\geq Cpn} \E\left( \sup_{I\in \ccF} \sum_{i \in I} \mu_I(i) \delta_i' \mid \sum_{i=1}^n \delta_i'=m\right) \P\left(\sum_{i=1}^n \delta_i'=m\right)\\
&\geq \frac{
1}{10} \P\left(\sum_{i=1}^n \delta_i'\geq Cpn\right)\geq \frac{1}{20},\label{ala321} 
\end{align}
where in the last inequality we used \cite{Lord} (we recall that $Cnp$ is an integer). The assertion follows by \eqref{ala123} and \eqref{ala321}.
\end{proof}

\begin{rema}\label{rem-mniejszy-zbior}
Let us observe that Theorem \ref{main1} implies that the event
\[\cbr{\sup_{t\in T} \sum_{i=1}^n t_i \delta_i \geq 221C \delta(T) } \]
is $Cp$-small. To see this let $\delta_i',\delta_i''$ be as in the proof of Theorem \ref{main2}. Denote $\delta'(T):=\E \sup_{t\in T} \sum_{i=1}^n t_i \delta_i'$. By Theorem \ref{main1} the event
\[\cbr{\sup_{t\in T} \sum_{i=1}^n t_i \delta'_i \geq 221\delta'(T) }\]
is $Cp$-small and trivially $$\cbr{\sup_{t\in T} \sum_{i=1}^n t_i \delta'_i \geq 221\delta'(T) }\supset \cbr{\sup_{t\in T} \sum_{i=1}^n t_i \delta_i \geq 221\delta'(T)}.$$
The inequality \eqref{ala123} implies that $\delta'(T)\leq C \delta(T)$. Thus, we obtain
\[\cbr{\sup_{t\in T} \sum_{i=1}^n t_i \delta_i \geq 221C \delta(T)} \subset  \cbr{\sup_{t\in T} \sum_{i=1}^n t_i \delta'_i \geq 221\delta'(T)},\]
and the result follows.
\end{rema}
\subsection{Proof of the Key Lemma}
The overall strategy for the proof is to construct the cover
of $\ccF$ based on the class of bad sets with fixed cardinality and show that its measure is appropriately
small. To achieve this goal we need sets closely related to so-called fragments (cf. \cite[Definition 2.3]{Park} or \cite{Park1}, \cite{Park2} for more context on the idea of the fragment). We introduce it after the next result.
Let $I_j$ be the subset of $I$ consisting of $j$ elements with the largest values of coefficients i.e. 
$$I_j=\left\{i_l\in I, \;1\leq l\leq j:\;\; \mu_I(i_1)\geq \mu_I(i_2)\geq\dots\geq\mu_I(i_j)\geq \sup_{i\in I\setminus \{i_1,\ldots,i_j \}}\mu_I(i) \right\},$$
where we set the convention that $I_0=\emptyset$ and for $j\geq |I|$ we have $I_j=I$. Also, $I_j^c=I\sm I_j$. 
\begin{lema}\label{lema1}
 Let $X\subseteq [n]$ be $c$-bad (recall \eqref{bad}) for some $c>0$
Then, for any $I\in\ccF$ there exists a number $j(I,X)$ such that 
\begin{equation}\label{cardinality}
|I_{j(I, X)}\cap X|<c|I_{j(I, X)}|.
\end{equation}
\end{lema}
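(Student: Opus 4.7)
My plan is to argue by contradiction via summation by parts on the coefficients of $I$ taken in nonincreasing order. I would first relabel the elements of $I$ as $i_1,i_2,\dots,i_{|I|}$ so that the sequence $a_k:=\mu_I(i_k)$ is nonincreasing; by the definition given just before the lemma this means $I_j=\{i_1,\dots,i_j\}$ for each $j$. I would then introduce the indicators $\chi_k:=\1_{\{i_k\in X\}}$ and the partial sums $S_j:=\sum_{k\ls j}\chi_k$, so that $S_j=|I_j\cap X|$.

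Next I would suppose toward contradiction that the required $j$ does not exist, i.e.\ that $S_j\gs c\,j$ for every $j\in\{1,\dots,|I|\}$. The Abel summation identity
\[
\mu_I(X\cap I)=\sum_{k=1}^{|I|}a_k\chi_k=\sum_{k=1}^{|I|-1}(a_k-a_{k+1})S_k+a_{|I|}S_{|I|}
\]
holds for any sequences, and since here $a_k-a_{k+1}\gs 0$ and $a_{|I|}\gs 0$, I may substitute the assumed bounds $S_k\gs ck$ and $S_{|I|}\gs c|I|$ to get
\[
\mu_I(X\cap I)\gs c\Big(\sum_{k=1}^{|I|-1}k(a_k-a_{k+1})+|I|\,a_{|I|}\Big)=c\sum_{k=1}^{|I|}a_k=c\,\mu_I(I)=c,
\]
where the middle step is a standard telescoping identity. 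This contradicts the assumption that $X$ is $c$-bad, which guarantees $\mu_I(X\cap I)<c$ for every $I\in\ccF$. Hence some $j(I,X)\in\{1,\dots,|I|\}$ with $|I_{j(I,X)}\cap X|<c|I_{j(I,X)}|$ must exist.

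The step I expect to be the main obstacle is not computational but conceptual: one has to notice that the correct way to link the prefix-density condition ``$|I_j\cap X|\gs c|I_j|$ for all $j$'' to the weighted mass $\mu_I(X\cap I)$ is Abel summation along the nonincreasing ordering of the coefficients. Once that idea is in place, the telescoping collapse to $\sum_k a_k=\mu_I(I)=1$ aligns exactly with the normalization built into the hypothesis, and no further input is needed; monotonicity of the $a_k$ and the normalization $\mu_I(I)=1$ are each used in a single essential way.
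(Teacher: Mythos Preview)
Your argument is correct: the Abel (summation by parts) identity applied along the nonincreasing ordering of the coefficients, together with the normalization $\mu_I(I)=1$, cleanly forces a contradiction with $c$-badness if every prefix satisfies $|I_j\cap X|\gs cj$. Nothing is missing.

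The paper, however, proves the lemma by a genuinely different and more constructive route. It studies the continuous map
\[
\va\longmapsto \sum_{i\in I\cap X}\mu_I(i)\wedge\va-c\sum_{i\in I}\mu_I(i)\wedge\va,
\]
notes that it vanishes at $\va=0$ and is negative at $\va=1$ (by $c$-badness), and takes $\va(I,X)$ to be the largest $\va$ at which it is still nonnegative. A first-order expansion at $\va(I,X)+\delta$ then yields $\sum_{i\in I\cap X}\1_{\{\mu_I(i)>\va(I,X)\}}<c\sum_{i\in I}\1_{\{\mu_I(i)>\va(I,X)\}}$, and one sets $j(I,X):=|\{i\in I:\mu_I(i)>\va(I,X)\}|$. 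Your proof is shorter and more elementary, but the paper's proof does extra work on purpose: it produces a \emph{canonical} threshold $\va(I,X)$ and a canonical $j(I,X)$ tied to it via $I_{j(I,X)}=\{i\in I:\mu_I(i)>\va(I,X)\}$. Both objects are reused verbatim in the rest of Section~3 (Lemma~\ref{pom}, Remark~\ref{pom1}, Lemma~\ref{zaw}), where the monotone relation between $\va(I,\cdot)$ and $j(I,\cdot)$ is exactly what drives the counting of witnesses. If you plug your existence proof into the paper you would still need to introduce $\va(I,X)$ separately for those later lemmas, so the saving is only local.
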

\begin{proof}
Assume that $X$ is non-empty, otherwise, there is nothing to prove. Fix $I\in\ccF$. For $\va\in[0,1]$ consider a mapping
$$\va\mapsto\sum_{i\in I\cap X}\mu_I(i)\wedge\va-c\sum_{i\in I}\mu_I(i)\wedge\va$$
and observe that it is continuous. Moreover, for $\va=1$ we have by (\ref{bad}) that $$\sum_{i\in I\cap X}\mu_I(i)-c\sum_{i\in I}\mu_I(i)<c-c\mu_I(I)=c-c=0,$$
and obviously $\va=0$ is mapped to $0$. This means that there exists the largest value $\va(I,X)\in[0,1)$ such that 
\begin{equation}\label{last}
\sum_{i\in I\cap X}\mu_I(i)\wedge\va(I,X)\geq c\sum_{i\in I}\mu_I(i)\wedge\va(I,X)
\end{equation}
and for $\va>\va(I,X)$ the strict reverse inequality holds. Now, observe that for sufficiently small $\delta>0$ we have
\begin{align}\label{loc78}
    \sum_{i\in I\cap X}\mu_I(i)\wedge(\va(I,X)+\delta)&=\sum_{i\in I\cap X}\mu_I(i)\wedge\va(I,X)+\delta\sum_{i\in I\cap X}\1_{\{\mu_I(i)>\va(I,X)\}}
    \end{align}
    and
 \begin{align}\label{79}   \sum_{i\in I}\mu_I(i)\wedge(\va(I,X)+\delta)&=\sum_{i\in I}\mu_I(i)\wedge\va(I,X)+\delta\sum_{i\in I}\1_{\{\mu_I(i)>\va(I,X)\}}.
\end{align}
By the definition of $\va(I,X)$
\[\sum_{i\in I\cap X}\mu_I(i)\wedge(\va(I,X)+\delta) < c \sum_{i\in I}\mu_I(i)\wedge(\va(I,X)+\delta).  \]
We combine the above  inequality with \eqref{loc78} and \eqref{79} to get that
$$\sum_{i\in I\cap X}\mu_I(i)\wedge\va(I,X)+\delta\sum_{i\in I\cap X}\1_{\{\mu_I(i)>\va(I,X)\}}<c\sum_{i\in I}\mu_I(i)\wedge\va(I,X)+c\delta\sum_{i\in I}\1_{\{\mu_I(i)>\va(I,X)\}},$$
but because of (\ref{last}) it follows that
$$\sum_{i\in I\cap X}\1_{\{\mu_I(i)>\va(I,X)\}}<c\sum_{i\in I}\1_{\{\mu_I(i)>\va(I,X)\}}.$$ Let 
\begin{equation}\label{joty}
\tilde{I}:=\{i\in I:\; \mu_I(i)>\va(I,X)\}\;\; \mbox{and}\;\; j(I,X):=|\tilde{I}|.
\end{equation}
The result follows, since obviously
\begin{equation}\label{wspol}
I_{j(I,X)}=\tilde{I}.
\end{equation} 
\end{proof}
\noindent Since the quantities $\varepsilon(I,X)$ and $j(I,X)$ play a central role in our
construction, we introduce them in a separate definition.
\begin{defi}\label{jotyiepsilony}
Let $X\subseteq [n]$ be $c$-bad (recall \eqref{bad}) for some $c>0$ and $I\in\ccF$. Let
$$F(\varepsilon)=\sum_{i\in I\cap X}\mu_I(i)\wedge\va-c\sum_{i\in I}\mu_I(i)\wedge\va.$$
We define $\varepsilon(I, X)$ as the largest argument $\varepsilon$ for which $F(\varepsilon)\geq0$. 
We also define 
\begin{align}\label{eq:defj}
 j(I,X):=|\{i\in I:\; \mu_I(i)>\va(I,X)\}|.   
\end{align}
\end{defi}
\begin{defi}[Witness]\label{Fragment}
Fix a $c$-bad set $X\subset [n]$. To each $I\in \ccF$ we associate the number $j(I,X)$ as in Definition \ref{jotyiepsilony}. Now, fix $I\in \ccF$. Consider sets $\hat{I}\in \ccF$  such that
\begin{equation}\label{zawieranie}
\hat{I}_{j(\hat{I},X)} \setminus X \subset I \setminus X.
\end{equation}
Among all $\hat{I}$ for which \eqref{zawieranie} holds we consider the ones for which $j(\hat{I},X)$ is the smallest possible. Among the latter, we pick any $\hat{I}$ such that $\hat{I}_{j(\hat{I},X)}\setminus X$ has the minimal number of elements and denote it by $I'=I'(I,X)$. We refer to $I'$ as the $(I,X)$ minimal set, or simply as the minimal set. We also define a $(I,X)$-witness as
$$W(I,X):=I'_{j(I',X)}.$$
\end{defi}
\noindent If $j=j(I',X)$ and $t=|I'_j\setminus X|$ then it is easy to see that by Lemma \ref{lema1} we have
\begin{equation}\label{ogrt}
j\geq t\geq (1-c)j.
\end{equation}
The aforementioned cover of $\ccF$ we are interested in is given for a fixed $c$-bad $X$ by
\begin{equation}\label{cover}
\ccG(X)=\{W(I, X)\setminus X: I\in\ccF\}.
\end{equation}
\begin{rema}
Indeed, this is a cover of $\ccF$ since $W(I,X)\setminus X\subseteq I$ for every $I\in\ccF$. Let us note that sets $W(I,X)\setminus X$ are analogs of fragments in \cite{Park}. However, the notion of the witness is more suitable for our proof.
\end{rema}
Since for any $X$, which is $1/2-$bad, $\ccG(X)$ is a cover of $\ccF$ and $\ccF$ is not $p-$small, we have 
$$\frac{1}{2}\leq \sum_{G\in \ccG(X)} p^{|G|}.$$
In addition, the number of sets $X\subset[n]$ with $|X|=m$ that are $1/2-$bad is $$\P(\zdarz \mid |X|=m) \binom{n}{m}.$$ Therefore, we have
\begin{equation}\label{nier1}
\frac{1}{2}\P(\zdarz \mid |X|=m) \binom{n}{m} \leq \sum_{\substack{\zdarz \\ |X|=m}} \sum_{G\in \ccG(X)} p^{|G|}.
\end{equation}

\noindent In order to bound the above double sum we need to control from above the number of repetitions of the witnesses. It is deduced in Corollary \ref{zlicz} from the following two results.

\begin{lema}\label{pom}
Take any $c$-bad sets $X,Y$ and $I,J \in \ccF$. Denote $I'=I'(I,X)$ and $J'=J'(J,Y)$ (the minimal sets). Assume that 
\begin{enumerate}
\item $j:=j(I',X)=j(J',Y)$ (recall \eqref{eq:defj}),\label{jed}
\item $Z:=I'_j\cup X=J'_j \cup Y$, \label{dwa}
\item $t:=|I'_j\setminus X|=|J'_j \setminus Y|$. \label{trzy}
\end{enumerate}
Then $$\sum_{i\in I'\cap Y} \mu_{I'}(i)\wedge \va(I',X)\geq c \sum_{i\in I'}\miu \wedge \epp,$$ where $\epp$ is defined as in \eqref{last}.
\end{lema}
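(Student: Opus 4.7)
My plan is to reduce the lemma to the set-theoretic comparison
\[
\sum_{i\in I'\cap Y}\mu_{I'}(i)\wedge \va(I',X) \;\geq\; \sum_{i\in I'\cap X}\mu_{I'}(i)\wedge \va(I',X). \qquad (\star)
\]
Indeed, setting $\va := \va(I',X)$, the very construction of $\va(I',X)$---recorded in \eqref{last} applied with $I := I'$---already provides $\sum_{i\in I'\cap X}\mu_{I'}(i)\wedge \va \geq c\sum_{i\in I'}\mu_{I'}(i)\wedge \va$, so $(\star)$ immediately yields the conclusion of the lemma.

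To prove $(\star)$ I would exploit the symmetry encoded in hypotheses \ref{dwa} and \ref{trzy}. Since hypothesis \ref{dwa} forces $X,Y\subseteq Z$, I introduce the ``complementary'' sets
\[
A := Z\setminus X = I'_j\setminus X,\qquad B := Z\setminus Y = J'_j\setminus Y,
\]
which share a common cardinality $|A|=|B|=t$ by hypothesis \ref{trzy}. The decisive asymmetry is that $A\subseteq I'_j\subseteq I'$, whereas $B\subseteq J'_j$ need not sit inside $I'$ at all. A short set-theoretic computation starting from $I'\cap X = (I'\cap Z)\setminus A$ and $I'\cap Y = (I'\cap Z)\setminus B$ then gives
\[
(I'\cap Y)\setminus(I'\cap X) = A\setminus B,\qquad (I'\cap X)\setminus(I'\cap Y) = I'\cap(B\setminus A).
\]

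The last step will be a one-line evaluation: on $A\subseteq I'_j$ the defining property \eqref{joty}--\eqref{wspol} gives $\mu_{I'}(i) > \va$, hence $\mu_{I'}(i)\wedge \va = \va$, while on $I'\cap(B\setminus A)$ the truncation only provides the trivial upper bound $\mu_{I'}(i)\wedge \va \leq \va$. Combined with the cardinality identity $|A\setminus B|=|B\setminus A|$, forced by $|A|=|B|=t$, this makes the difference in $(\star)$ at least $\va(|A\setminus B|-|B\setminus A|)=0$. I do not anticipate a genuine obstacle here: the whole content of the lemma is this cardinality-preserving swap on the ``top slice'' $I'_j$. The only delicate point is that $B$ may miss $I'$, which is why we must discard the part $B\setminus I'$ using only the trivial truncation bound---a step that works \emph{only} because $|A|=|B|$.
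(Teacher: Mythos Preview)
Your proof is correct and follows essentially the same route as the paper: both arguments reduce to the comparison $(\star)$ and then exploit that $Z\setminus X$ sits inside the top slice $I'_j$ (where the truncation saturates at $\va$) while $Z\setminus Y$ only enjoys the trivial bound, with the cardinality match $|Z\setminus X|=|Z\setminus Y|=t$ from hypothesis~\ref{trzy} closing the gap. The paper carries this out via a longer chain of equalities that splits $I'\cap Y$ along $I'_j$ and $(I'_j)^c$ and tracks the piece $(I'_j)^c\cap Z$ explicitly, whereas your symmetric-difference bookkeeping $(I'\cap Y)\setminus(I'\cap X)=A\setminus B$, $(I'\cap X)\setminus(I'\cap Y)=I'\cap(B\setminus A)$ reaches the same inequality more directly; the mathematical content is identical.
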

\begin{rema}\label{pom1}
Observe that, by the definition, $\va(I', Y)$ is the greatest number for which the above inequality is satisfied. This means that $\va(I',Y)\geq \va(I',X)$. This is the key consequence of Lemma \ref{pom}. 
\end{rema}
\begin{proof}[Proof of Lemma \ref{pom}.]
Let  $(I'_j)^c:=I'\setminus I'_j$ (the complement of $I'_j$ in $I'$) where we recall (see \eqref{wspol})
\[I'_j=\{i\in I': \mu_{I'}(i)>\eps(I',X)\}.\]
By the above and since $Y\subset Z$ we have
\begin{align*}
\sum_{i\in I'\cap Y} \mu_{I'}(i)\wedge &\va(I',X)=\sum_{i\in (I'_j)^c\cap Y} \mu_{I'}(i)+\sum_{i\in I'_j\cap Y}\epp\\
&=\sum_{i\in (I'_j)^c\cap Z} \mu_{I'}(i)-\sum_{i\in (I'_j)^c\cap (Z\setminus Y)} \mu_{I'}(i)+\epp|I'_j\cap Y|\\
&=\sum_{i\in (I'_j)^c\cap Z} \mu_{I'}(i)-\sum_{i\in (I'_j)^c\cap (J'_j\setminus Y)} \mu_{I'}(i)+\epp\left(|I'_j\cap Z|-|I'_j\cap(Z\setminus Y)| \right)\\
&=\sum_{i\in (I'_j)^c\cap Z} \mu_{I'}(i)-\sum_{i\in (I'_j)^c\cap ((J'_j\setminus I'_j)\setminus Y)} \mu_{I'}(i)+\epp\left(j-|I'_j\cap(J'_j\setminus Y)| \right),
\end{align*}
where in the last two equalities we used the assumption \ref{dwa}. By the assumption \ref{trzy}, the last line is equal to

\begin{align*}
&\sum_{i\in (I'_j)^c\cap Z} \mu_{I'}(i)-\sum_{i\in (I'_j)^c\cap ((J'_j\setminus I'_j)\setminus Y)} \mu_{I'}(i)+\epp\left(j-t+|(J'_j\setminus I'_j)\setminus Y|\right)\\
&\geq \sum_{i\in (I'_j)^c\cap Z} \mu_{I'}(i) +\epp(j-t).
\end{align*}
The last inequality holds, because  if $i\in (I'_j)^c$ then by the definition of $I'_j$ (see \eqref{wspol}) we have that $\mu_{I'}(i)<\epp$. Now, using assumption \ref{trzy} once more,
\begin{align*}
\sum_{i\in (I'_j)^c\cap Z} \mu_{I'}(i) +\epp(j-t)&=\sum_{i\in (I'_j)^c\cap Z} \mu_{I'}(i) +\epp|I'_j \cap X|\\
&=\sum_{i\in I'\cap X}\miu \wedge \epp\geq c\sum_{i\in I'} \miu \wedge \epp.
\end{align*}
The last inequality follows again from the definition \eqref{last} of $\epp$.
\end{proof}

\begin{lema}\label{zaw}
Under the assumptions of Lemma \ref{pom} it is true that $I'_j\setminus Y=J_j'\setminus Y$ (i.e. $ W(I,X) \setminus Y=W(J,Y) \setminus Y $).
\end{lema}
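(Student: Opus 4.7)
The plan is to exploit the two-tier optimality built into the definition of $W(J,Y)=J'_j$ by showing that $I'$ itself qualifies as an admissible competitor in that construction, and then squeeze $|I'_j\setminus Y|$ between the bound forced by the minimality of $J'$ and the bound coming from the set identity in assumption \ref{dwa}.

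First I would invoke Remark \ref{pom1}, which is the direct consequence of the just-proved Lemma \ref{pom}, to obtain $\va(I',Y)\geq \va(I',X)$. Since $j(I',\cdot)$ counts the coordinates $i\in I'$ with $\mu_{I'}(i)>\va(I',\cdot)$, this immediately gives $j(I',Y)\leq j(I',X)=j$. To verify that $I'$ is indeed a valid candidate in the construction of $W(J,Y)$, I then need the inclusion $I'_{j(I',Y)}\setminus Y\subset J\setminus Y$; since $I'_{j(I',Y)}\subset I'_j$, it suffices to show $I'_j\setminus Y\subset J\setminus Y$. The identity $I'_j\cup X=J'_j\cup Y$ from assumption \ref{dwa} yields $I'_j\setminus Y\subset(J'_j\cup Y)\setminus Y=J'_j\setminus Y$, and $J'_j\setminus Y\subset J\setminus Y$ holds by the very choice of $J'$ in the construction of $W(J,Y)$.

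With $I'$ now recognized as an admissible candidate, the first-level minimality in Definition \ref{Fragment} forces $j(I',Y)\geq j(J',Y)=j$, hence $j(I',Y)=j$ and in particular $I'_{j(I',Y)}=I'_j$. The second-level minimality then yields $|I'_j\setminus Y|\geq |J'_j\setminus Y|=t$, while the inclusion $I'_j\setminus Y\subset J'_j\setminus Y$ obtained above, combined with $|J'_j\setminus Y|=t$, gives the reverse bound. Consequently $I'_j\setminus Y$ and $J'_j\setminus Y$ are nested sets of equal cardinality $t$, so they must coincide, which is the claimed equality.

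The only step with genuine content is the very first: noticing that Remark \ref{pom1} transplants the role of $X$ onto $Y$ for the fixed set $I'$, which is what makes $I'$ eligible as a competitor in the minimization defining $W(J,Y)$. Everything afterwards is routine bookkeeping with the set identity from assumption \ref{dwa} and the two layers of minimality encoded in Definition \ref{Fragment}, so I do not expect any hidden obstacle.
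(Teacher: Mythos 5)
Your proof is correct and takes essentially the same route as the paper's: both rest on Remark \ref{pom1} (i.e. $\va(I',Y)\gs\va(I',X)$, the consequence of Lemma \ref{pom}), the set identity of assumption \ref{dwa} to get $I'_j\setminus Y\subset J'_j\setminus Y\subset J\setminus Y$, and the two layers of optimality in Definition \ref{Fragment} to force $j(I',Y)=j$ and then squeeze the nested sets to equality. The only difference is the order of deductions (you obtain $j(I',Y)\ls j$ from the threshold comparison first and then $j(I',Y)\gs j$ from first-level minimality, whereas the paper argues minimality first and excludes $j(I',Y)>j$ afterwards), which is purely cosmetic.
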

\begin{proof}
By the construction of the witness, together with assumption \ref{dwa} of Lemma \ref{pom}, we have
\begin{equation}\label{loc1}
J\setminus Y \supset J'_j\setminus Y=(J'_j\cup Y) \setminus Y=(I'_j\cup X) \setminus Y \supset I'_j \setminus Y.\
\end{equation}
Thus we must have
\begin{equation}\label{lowb}
j_0:=j(I',Y)\geq j,
\end{equation} 
otherwise we would have
\[I'_{j_0}\setminus Y \subset  I'_j \setminus Y \subset J\setminus Y ,\]
which would contradict the optimality of $j=j(J',Y)$. Recall (\ref{wspol}) and suppose that $j_0=j(I',Y)>j$. The larger cardinality obviously indicates the lower threshold, therefore we have $\va(I',Y)<\va(I',X)$. This, however, contradicts Lemma~\ref{pom} (see Remark~\ref{pom1}); hence, we must have $j_0 = j$.  This implies that $|I_j'\setminus Y|=|J'_j\setminus Y|$ by the construction of the witness. Since these two sets have the same cardinality and one is a subset of the other by~\eqref{loc1}, it follows that in fact $I'_j \setminus Y = J'_j \setminus Y$.
\end{proof}
\begin{coro}\label{zlicz}
Fix $m,t$ and  $Z\subset [n]$ such that $|Z|=m+t$. Fix also $X\subset Z$ which is $c$-bad and $|X|=m$.
Then for any $t\leq j\leq n$ 
\begin{equation}\label{zbiorit}
    \left|\{W(I,X)\setminus X : I\in \ccF,\ |W(I,X)|=j,\ Z=W(I,X)\cup X,\ |W(I,X)\setminus X|=t   \}\right| \leq \binom{j}{t}. 
\end{equation}

\end{coro}
\begin{proof}
Fix any $X,I$ satisfying conditions in \eqref{zbiorit}. Take any other pair $Y,J$ which also satisfies the same conditions. By Lemma \ref{zaw}, $W(J,Y)\setminus Y \subset W(I,X) $, so there are at most ${j \choose t}$ choices for $W(J,Y)\setminus Y $.
\end{proof}
\noindent We are in the position to bound the expression in \eqref{nier1} and prove the Key Lemma.
\begin{proof}[Proof of Lemma \ref{zly}]
We recall the notation  $I'_j=W(I,X)$, where $j=j(I,X)$ (cf. \eqref{joty}).
Recall the construction of the cover \eqref{cover}. Then  \eqref{ogrt} together with Corollary \ref{zlicz} imply that
\begin{align}
\sum_{\substack{\zdarz \\ |X|=m}} \sum_{G\in \ccG(X)} p^{|G|}&=\sum_{j\geq 1} \sum_{j/2\leq t \leq j} \sum_{|Z|=m+t} p^t|\{W(I,X)\setminus X: Z=I'_j\cup X,\ |I'_j \setminus X|=t\}| \nonumber\\
&\leq \sum_{j\geq 1} \sum_{j/2\leq t \leq j} \sum_{|Z|=m+t} p^t\binom{j}{t}=\sum_{t=1}^n \sum_{j=t}^{2t}  \binom{j}{t}\binom{n}{m+t}p^t.\label{nier2}
\end{align}
Clearly
\[\sum_{j=t}^{2t} \binom{j}{t}=\binom{2t+1}{ t+1}\leq \frac{1}{2}2^{2t+1}=4^t\]
and
\[\frac{\binom{n}{m+t}}{\binom{n}{m}}\leq \left( \frac{n}{m} \right)^t.\]
Those two inequalities applied to \eqref{nier2} give
\begin{align*}
\sum_{\substack{\zdarz \\ |X|=m}} \sum_{G\in \ccG(X)} p^{|G|}&\leq \binom{n}{m} \sum_{t=1}^n \left(4 \frac{np}{m}\right)^t.
\end{align*}
The above together with \eqref{nier1} imply the assertion.
\end{proof}

\section{Positive infinitely divisible processes}
 The following definition is an analog of $p$-smallness for an event in $\R^T$.
\begin{defi}\label{deltasmall}
We say that the random event $A$ is $\delta$-small if there exists a family $\ccF$ of pairs $(g,u)$, where $g:\R^{T}\ra \R_{+}$
is measurable (with respect to the cylindrical  $\sigma$-algebra) and $u\geq 0$ such that
$$
A\subset \bigcup_{(g,u)\in \ccF}\{|X|_g\geq u\}$$
and $$\sum_{(g,u)\in \ccF}\P(|X|_g\geq u)\leq \delta. 
$$
\end{defi}
\noindent In other words Theorem \ref{twpod} states that the event $\cbr{\sup_{t\in T}|X|_t \geq K\E\sup_{t\in T}|X|_t  }$ is $1/2$-small.
\begin{proof}[Proof of Theorem \ref{twpod}]
Note that if $S=\E\ \sup_{t\in T} |X|_t = \infty$ then the left hand side of (\ref{event}) is empty, so any small cover will work in this case. In particular, we can assume that
\begin{equation}\label{nuogr}
 \sup_{t\in T} \int_{\R^T} |t(x)|d\nu(x) =\sup_{t\in T} \E\sum_{i\geq1}|t(Y_i)|= \sup_{t\in T} \E|X|_t\leq \E \sup_{t\in T}|X|_t=S  < \infty.
\end{equation}
The proof is based on successive simplifications, which ultimately allow us to apply Theorem~\ref{main1}.\\
\noindent {\bf Step 1.} Consider some fixed large integer $N$ and functions $f,h:\R \to \R$ given by 
\[f(s)=|s|\1_{\{|s|\geq 2^{-N}\}},\ h(s)=|s|\1_{\{|s|<2^{-N}\}}.\]
We split the process into two parts
\begin{align*}
  |X|_t=\sum_{i\geq 1} |t(Y_i)|\1_{\{|t(Y_i)|\geq 2^{-N}\}}+\sum_{i\geq 1} |t(Y_i)|\1_{\{|t(Y_i)|<2^{-N}\}}=|X|_{f(t)}+|X|_{h(t)}.
\end{align*}
Since $|X|_t \geq |X|_{f(t)}$ we have (we recall that $K$ may differ at each occurance)
\begin{align} \label{eq:loca1a}\{\sup_{t\in T}|X|_t \geq KS  \}&\subset \{\sup_{t\in T}|X|_{f(t)} \geq K \E\sup_{t\in T}|X|_{f(t)}    \} \cup \ \bigcup_{t\in T}\{|X|_{h(t)} \geq KS  \}.
\end{align}
Markov's inequality implies that
\begin{align*}
\sum_{t\in T} \P(|X|_{h(t)} \geq KS)\leq \sum_{t\in T} \frac{\E |X|_{h(t)}}{KS}=\sum_{t\in T} \frac{\int_{\R^T} |t(x)|\1_{\{|t(x)|<2^{-N}\}} d\nu(x)}{KS}. 
\end{align*}
Since $T$ is finite, by \eqref{nuogr} we have that for sufficiently large  $N$, $\sum_{t\in T} \P(|X|_{h(t)} \geq KS)<\frac{1}{16}$. So by \eqref{eq:loca1a} and the above considerations it is enough to show that the event
\[\{\sup_{t\in T}|X|_{f(t)} \geq K \E \sup_{t\in T}|X|_{f(t)}\}\]
is  $7/16$-small. Thus, without loss of generality, we can assume that 
\begin{equation}\label{eq:ucieciet}
\forall_{t\in T}  \forall_{x\in \R^T} \ \ t(x)=0 \text{ or } |t(x)|\geq 2^{-N}.   
\end{equation}
In particular, this implies that
\begin{equation}\label{nuosz}
\nu\left(x\in \R^T: |t(x)|>0\right) \leq 2^N\int_{\mathbb{R}^T} |t(x)| d\nu(x) \leq 2^N S.
\end{equation}
\noindent {\bf Step 2}  
Let $M:=16S|T|$. For each $t\in T$ denote $B(t):=\{x\in \R^T:\; |t(x)|\geq M \}$. Since for each $t\in T$
\[\{\exists_i : |t(Y_i)|\geq M \}=\{|X|_{\1_{B(t)}}\geq 1\}, \]
we have that
\begin{equation*}
   \{\sup_{t\in T}|X|_t >KS\} \subset \{\sup_{t\in T} \sum_i |t(Y_i)|\1_{|t(Y_i)|\leq M }\geq KS  \}\cup \bigcup_{t\in T}\{|X|_{\1_{B(t)}}\geq 1\}.
\end{equation*}
Observe that
\[
\sum_{t\in T }\P(|X|_{\1_{B(t)}}\geq 1)\leq  \sum_{t\in T } \E |X|_{\1_{B(t)}}\leq  \sum_{t\in T }\E \frac{|X|_t}{M } \leq \sum_{t\in T } \frac{S}{M} \leq \frac{1}{16}.
\]
Thus,  it is enough to show that the following event is $6/16$ small
\[\{\sup_{t\in T} \sum_i |t(Y_i)|\1_{|t(Y_i)|\leq M}\geq KS  \}. \]
\noindent {\bf Step 3.}  
The third step is to replace the measurable functions $t\in T$ (treated as the functions on $\R^T$) with the step functions. Define the following set of sequences, indexed by  $T$
\begin{align*}
    \mathcal{S}:=\{(k(t))_{t\in T}:\forall_{t\in T} 0\leq k(t)\leq M4^N \textrm{ and } \exists_{t\in T} k(t) \neq 0  \}.
\end{align*}
Let
\begin{align*}
\all&:=\{A^N_{(k(t))_{t\in T}} : (k(t))_{t\in T}\in \mathcal{S} \},
\end{align*}  
where
\begin{align*}
   A^N_{(k(t))_{t\in T}}&:=\bigcap_{t\in T}\{x\in \R^T:\; \frac{k(t)-1}{4^N}< |t(x)|\leq  \frac{k(t)}{4^N} \}.   
\end{align*}
If $k(t)=0$ then \eqref{eq:ucieciet} implies
\[\{x\in \R^T:\; \frac{k(t)-1}{4^N}< |t(x)|\leq  \frac{k(t)}{4^N} \}=\{x\in \R^T: x(t)=0 \}.\]
Since the sequence constantly equal to zero does not belong to $\mathcal{S}$, we have
\begin{align}
    \bigcup_{A \in \all} A &= \left( \bigcap_{t\in T}\{x\in \R^T: |t(x)|\leq M\} \right) \setminus \{x\in \R^T: \forall_{t\in T}\ t(x)=0 \}\nonumber \\
    &\subset \bigcup_{t\in T}\{x\in \R^T: 0<|t(x)|\leq M\}  \label{s-10-ineq2}. 
\end{align}
Because $\all$ consists of disjoint sets, by using \eqref{nuosz} we obtain
\begin{equation}\label{sumvosz}
\sum_{A\in \all} \nu(A)\leq \sum_{t\in T} \nu\left( \{x\in \R^T: 0<|t(x)|\leq M\}\right)\leq  2^N|T|S.
\end{equation}
For each $t\in T$ and $A\in \all$ let $k(A,t)$ be the unique integer such that the number $t(A):=k(A,t)/4^N$ satisfies
\begin{equation}\label{przyblizN}
 \forall_{x\in A}\    t(A)-1/4^N< |t(x)|\leq t(A).
\end{equation}
Thus, if we define
\begin{equation}\label{def:poiss-zlicz}
Y(A):=\sum_{i\geq1}\1_{\{Y_i\in A\}},
\end{equation}
by \eqref{s-10-ineq2} and  \eqref{przyblizN} we obtain that
\begin{align*}
    \sup_{t\in T} \sum_i |t(Y_i)|\1_{|t(Y_i)|\leq M} \leq \sup_{t\in T} \sum_{A\in \all}t(A)Y(A).
\end{align*}
As a result
\begin{equation*}\label{ord0}
   \{\sup_{t\in T} \sum_i |t(Y_i)|\1_{|t(Y_i)|\leq M} \geq KS\} \subset \{\sup_{t\in T} \sum_{A\in \all}t(A)Y(A)\geq KS  \},
\end{equation*}
and it is sufficient to show that the latter set is $6/16$ small.

\noindent {\bf Step 4.} Let
\begin{align}
&\ccA_N:=\left\{A: A \in \all,\ \nu(A)>0\right\},& &\ccN_N:=\left\{A:A \in \all,\ \nu(A)=0\right\}. \label{miarzero}
\end{align}
Clearly,
\[ \{\sup_{t\in T} \sum_{A\in \all}t(A)Y(A)\geq KS  \}\subset  \{\sup_{t\in T} \sum_{A\in \ccA_N}t(A)Y(A)\geq K S  \}\cup \bigcup_{A\in \ccN_N}\{|X|_{\1_A}\geq 1 \}. \]
Since
\begin{equation}\label{ord1}
    \sum_{A\in \ccN_N}\P(|X|_{\1_A}\geq 1)=\sum_{A\in \ccN_N} \nu(A)=0,
\end{equation}
it is sufficient to show that $\{\sup_{t\in T} \sum_{A\in \ccA_N}t(A)Y(A)\geq K S  \}$ is $6/16$ small.
Observe that trivially from \eqref{sumvosz}
\begin{equation}\label{sumAosz}
\sum_{A\in \ccA_n} \nu(A)\leq 2^N|T|S.
\end{equation}


\noindent {\bf Step 5.}
Denote $\bar{S}:=\E \sup_{t\in T} \sum_{A\in \ccA_N}t(A)Y(A)$. Using in the first inequality \eqref{przyblizN} and \eqref{sumAosz} in the second  we get that
\begin{align*}
S&=\E \sup_{t\in T}\sum_{i\geq 1} |t(Y_i)|\geq \E \sup_{t\in T}\sum_{A\in \ccA_n} (t(A)-\frac{1}{4^N})Y(A)=\bar{S}-\sum_{A\in \ccA_n} \frac{1}{4^N}\E Y(A)\\
&=\bar{S}-\frac{1}{4^N}\sum_{A\in\ccA_N}\nu(A)\geq \bar{S}-\frac{2^N|T|S}{4^N}\geq \frac{\bar{S}}{2}
\end{align*}
for $N$ large enough. So, 
\[\left\{\sup_{t\in T} \sum_{A\in \ccA_n} t(A)Y(A)\geq KS  \right\} \subset \left\{\sup_{t\in T} \sum_{A\in \ccA_n} t(A)Y(A)\geq K\bar{S}  \right\} \] 
and it is enough to prove that the latter event  is $6/16$-small (note that the costant $K$ is now different).

\noindent {\bf Step 6.}  The next step is to divide each $A\in \ccA_N$ into subsets of equal measure and a "little" reminder. Let (since $|\ccA_N|<\infty$ the inequality follows)
\begin{equation}\label{eq:wybp}
    p:=\frac{\min\left(1,S,\inf_{A\in \ccA_n} \nu (A)\right)}{16\cdot 2^N S |T|} \in (0,1).
\end{equation}
For each $A\in \ccA_N$ let $m(A):=\lfloor \nu(A)/p \rfloor$. Fix $A\in \ccA_N$.  Since $\nu$ is atomless there exist $\nu$-measurable disjoint sets $A_1, A_2,\ldots A_{m(A)}\subset A$ such that $\nu(A_k)=p$. Let also $A_{\ast}:=A\backslash \bigcup^{m(A)}_{k=1}A_k$.   Obviously 
\begin{equation}\label{tuv}
    \nu(A_{\ast})\leq p \leq \frac{\nu(A)}{16\cdot 2^NS|T|}.
\end{equation}
For any $A\in \ccA_N$  the sets $(A_i)_{i\leq m(A)}$ are disjoint. Thus,
\begin{equation}\label{eq:zlicz}
 \forall_{A\in \ccA_n}\ \sum_{i=1}^{m(A)}\1_{Y(A_i)>0}  \leq     \sum_{i=1}^{m(A)}Y(A_i) \leq Y(A).
\end{equation}
We define
\[\rod:=\{A_k :\ A\in \ccA_N,k\leq m(A) \}, \]
and for each $B\in \rod$
\begin{align}\label{eq:deftb}
t(B):=t(A) \textrm{ where } A\in \ccA_N \textrm{ is the unique set s.t. }  B\subset A.
\end{align}
Since the constructed $t(A),t(B)$ are nonnegative and using $\eqref{eq:zlicz}$ we obtain 
\begin{align*}
\bar{S}=\E \sup_{t\in T} \sum_{A\in \ccA_N}t(A)Y(A) \geq \E \sup_{t\in T} \sum_{B\in \rod}t(B)\1_{Y(B)>0}:=S'.    
\end{align*}
By the above
\begin{align*}
\left\{\sup_{t\in T} \sum_{A\in \ccA_N}t(A)Y(A)\geq K\bar{S}  \right\} \subset 
  \left\{\sup_{t\in T} \sum_{B\in \rod} t(B)Y(B)\geq KS'  \right\}\cup \bigcup_{A\in \ccA_N}\{|X|_{\1_{A_{\ast}}}\geq 1\}.
\end{align*}

From \eqref{tuv} and \eqref{sumAosz} we get that
\begin{align*}
\sum_{A\in \ccA_N} \P( |X|_{\1_{A_{\ast}}}\geq 1) \leq \sum_{A\in \ccA_N} \E |X|_{\1_{A_{\ast}}}=\sum_{A\in \ccA_N} \nu(A_{\ast})\leq \sum_{A\in \ccA_N} \frac{\nu(A)}{16\cdot 2^N S|T|}\leq \frac{1}{16}.
\end{align*}
So it is sufficient to show that the event 
\[\left\{\sup_{t\in T} \sum_{B\in \rod} t(B)Y(B)\geq KS'  \right\} \]
is $5/16$-small.

\noindent  {\bf Step 7.}
This step shows that the random variables $Y(B)$ can be replaced by the independent Bernoulli variables. Let $n:=|\rod|$. Formulas \eqref{sumAosz}, \eqref{eq:wybp} (and since for all $B\in \rod$ we have $\nu(B)=p$) imply
\begin{equation}\label{eq:pokemon}
np^2= p\sum_{B\in\rod}\nu(B)\leq p\sum_{A\in \ccA_N} \nu(A)\leq \frac{1}{16}.    
\end{equation}
 Clearly,
\[ \{Y(B)>1 \}=\{|X|_{\1_{B}}\geq 2 \}. \]
So, we immediately obtain that
\[
\left\{\sup_{t\in T}  \sum_{B\in \rod}   t(B)Y(B)\geq KS' \right\} \subset \left\{\sup_{t\in T}  \sum_{B\in \rod}  t(B)\1_{\{Y(B)>0\}} \geq KS' \right\} \cup \bigcup_{B\in \rod} \{|X|_{\1_{B}}\geq 2 \}.
\]
Since $(Y_i)_{i \geq 1}$ is the Poisson point process the random variables $(Y(B))_{B \in \rod}$ are independent (we will need this property later) Poisson r.v.'s with the parameter $\nu(B)=p$. Thus (recall \eqref{eq:pokemon}),
\begin{align*}
\sum_{B\in \rod}  \P(|X|_{\1_{B}}\geq 2)&=\sum_{B\in \rod}  \P(Y(B)>1)=n(1-e^{-p}(1+p))\leq np^2\leq \frac{1}{16}.
\end{align*}
We have reduced our problem to the task of proving that the following event is $1/4$-small
\[ \left\{\sup_{t\in T}  \sum_{B\in \rod}  t(B)\1_{\{Y(B)>0\}}\geq KS'  \right\}.\]
\noindent {\bf Final step.} 
The random variables $(Y(B))_{B\in \rod}$ are i.i.d. Poisson$(p)$ r.v.'s (see the previous step). Thus, Theorem \ref{main1} ensures that the event 
\[\left\{\sup_{t\in T}  \sum_{B\in \rod}   t(B)\1_{\{Y(B)>0\}} \geq KS'\right\}\]
is $(1-e^{-p})$-small. Since $p\geq 1-e^{-p} \geq p/e$ (recall \eqref{eq:wybp}) by Remark \ref{rem-mniejszy-zbior} we can argue that for sufficiently large, numerical constant $K$ this event is $(2ep)$-small. This  means that
there exists a family $\ccF$ of subsets of the set $\rod$   such that
\[
\left\{\sup_{t\in T}  \sum_{B\in \rod}   t(B)\1_{\{Y(B)>0\}} \geq KS'\right\}\subset \bigcup_{F\in \ccF}\bigcap_{B\in F}\{Y(B)>0\}
\]
and
\begin{equation}\label{eq:koniecsum}
\sum_{F\in \ccF}(2ep)^{|F|}\leq \frac{1}{2}.    
\end{equation}
Now, for $F\in \ccF$ define
\[g_{F}(x):=\1_{\bigcup_{B\in F}}(x).\]
  Note that   
 \[
\bigcap_{B\in F}\{Y(B)>0\}\subset \left\{\sum_{B\in F }Y(B)\geq |F|\right\}=\{|X|_{g_{F}}\geq |F|\},
 \]
so
\[\cbr{\sup_{t\in T}  \sum_{B\in \rod}   t(B)\1_{\{Y(B)>0\}} \geq KS'}\subset \bigcup_{F\in \ccF} \cbr{|X|_{g_{F}}\geq |F|}.\]
Since  $\sum_{B\in F }Y(B)\sim Poiss(p|F|)$ Chernoff's inequality implies that (we recall that $p<1$ so that $\ln p<0$)
\begin{align*}
\P(|X|_{g_{F}}\geq |F|)&=\P\left(\sum_{B\in F }Y(B)\geq |F|\right)\leq e^{|F| \ln p}\E e^{-\ln p \sum_{B\in F }Y(B) }\\
&=e^{|F| \ln p} e^{p|F|(\frac{1}{p}-1)}
\leq (ep)^{|F|}.
\end{align*}
By recalling \eqref{eq:koniecsum} we obtain 
\[\sum_{F\in \ccF}\P(|X|_{g_{F}}\geq |F|)\leq \sum_{F\in \ccF} (ep)^{|F|}\leq \frac{1}{4}.\]
Thus, we have found the witness that the event  
$$\cbr{\sup_{t\in T}  \sum_{B\in \rod}   t(B)\1_{\{Y(B)>0\}}\geq KS'  }$$ is $1/4$-small.
\end{proof}

\section{Positive empirical processes} 
The proof of Theorem \ref{twemp} is similar to the one in the previous section. We want to partition $E$ into a sequence of appropriately small sets with respect to the measure $\mu$, argue that it is enough to investigate only one representative of each of the partition sets, and therefore reduce the problem to the selector case. The main difference is that the analogs of indicators from Step $7$ are now not independent. To deal with this fact, we first prove a version of Theorem \ref{main1}. \\
\noindent 
Let us use the notation $\binom{[n]}{m}$ for a class of all $m$-element subsets of $[n]$. Suppose that $Y\in\binom{[n]}{m}$ is chosen uniformly. By $\P_Y$ and $\E_Y$ we denote the probability and the expectation with respect to $Y$.
\noindent We aim to prove the following conditional version of Theorem \ref{main2}.  
\begin{theo}\label{master}
Let $\ccF$ be a family of subsets of $[n]$ which is not $p$-small. Suppose that for each $I\in\ccF$ we are given coefficients $(\mu_I(i))_{i\in I}$ with $\mu_I(i)\geq0$ and  $\sum_{i\in I}\mu_I(i)\geq 1.$
Let $Y$ be uniformly distributed on $\binom{[n]}{m}$ where $m\geq 9pn$. Then 
$$\E_Y\sup_{I\in\ccF}\sum_{i\in I\cap Y}\mu_I(i)\geq\frac{1}{10}.$$
\end{theo}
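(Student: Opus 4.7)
The plan is to deduce Theorem \ref{master} directly from the Key Lemma (Lemma \ref{zly}), which already controls the probability of being $1/2$-bad conditionally on the cardinality of the random set. The essential observation is that the random set $X$ from \eqref{randomset}, built from independent Bernoulli variables with \emph{any} success probabilities, has conditional law on $\{|X|=m\}$ equal to the uniform distribution on $\binom{[n]}{m}$. Thus $Y$ in Theorem \ref{master} has the same distribution as $X$ conditioned on $|X|=m$, and Lemma \ref{zly} yields
\[
\P_Y(Y \text{ is } 1/2\text{-bad}) \leq \sum_{t=1}^n \left(\frac{4np}{m}\right)^t.
\]

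Using the hypothesis $m \geq 9pn$ we get $4np/m \leq 4/9$, whence
\[
\sum_{t=1}^n \left(\frac{4np}{m}\right)^t \leq \sum_{t=1}^{\infty} \left(\frac{4}{9}\right)^t = \frac{4}{5}.
\]
Hence $\P_Y(Y \text{ is not } 1/2\text{-bad}) \geq 1/5$. By the definition of $1/2$-bad, whenever $Y$ is not $1/2$-bad, there exists $I \in \ccF$ with $\sum_{i \in I \cap Y}\mu_I(i) \geq 1/2$, and therefore
\[
\E_Y \sup_{I\in\ccF}\sum_{i\in I \cap Y}\mu_I(i) \geq \frac{1}{2}\cdot \frac{1}{5} = \frac{1}{10},
\]
which is the desired bound.

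The one nontrivial point, and the only mild obstacle, is to verify that the Key Lemma still applies with the relaxed normalization $\mu_I(I) \geq 1$ in place of the equality $\mu_I(I) = 1$ used in Theorem \ref{main2}. Inspecting the proof of Lemma \ref{lema1}, the normalization enters only through the chain
\[
\sum_{i \in I \cap X}\mu_I(i) - c \sum_{i \in I}\mu_I(i) < c - c\mu_I(I),
\]
where one needs the right-hand side to be $\leq 0$; this persists under $\mu_I(I) \geq 1$ since $c\mu_I(I) \geq c$. The construction of the witness in Definition \ref{Fragment}, the combinatorial Lemmas \ref{pom} and \ref{zaw}, Corollary \ref{zlicz}, and the final counting estimate in the proof of Lemma \ref{zly} do not depend on the precise normalization of the coefficients, so the Key Lemma extends verbatim. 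Once this is noted, the theorem follows from the two elementary calculations above; the threshold $m \geq 9pn$ is calibrated precisely so that the geometric sum is bounded by $4/5$.
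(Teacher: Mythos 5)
Your proposal is correct and follows essentially the same route as the paper: identify $Y$ with the random set $X$ of \eqref{randomset} conditioned on $\{|X|=m\}$, apply Lemma \ref{zly} with $4np/m\ls 4/9$ to bound the probability of being $1/2$-bad by the geometric sum $4/5$, and conclude $\E_Y\sup_{I\in\ccF}\sum_{i\in I\cap Y}\mu_I(i)\gs\frac{1}{2}\cdot\frac{1}{5}$. Your additional check that the Key Lemma machinery survives the relaxed normalization $\mu_I(I)\gs 1$ is a detail the paper leaves implicit, and you verify it correctly.
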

\begin{proof}
Let $X$ be given by \eqref{randomset}. Then $X$ conditionally on $\{|X|=m\}$ has the same distribution as $Y$ (uniform). So by Lemma \ref{zly}
\[\P(Y \textit{is 1/2-bad})\leq \sum_{t=1}^n \left(4\frac{np}{m} \right)^t \leq \sum_{t\geq1} \left(\frac{4}{9}\right)^t \leq  \frac{4}{5}. \]
Thus
\[ \E_Y\sup_{I\in\ccF}\sum_{i\in I\cap Y}\mu_I(i)\geq \frac{1}{2} \P(Y \textit{ is not 1/2-bad})\geq \frac{1}{2} \left(1-\frac{4}{5} \right).\]
\end{proof}

\begin{rema}
Theorems \ref{main1} and \ref{master} are in the sense exposition of the same result from two perspectives. Theorem \ref{master} emphasizes the necessary relation between $m$ and $n$ for providing a $p$-small cover which we need in the application for empirical processes.
\end{rema}

\begin{theo}\label{malarodzina}
The family
$$\ccF=\cbr{I\in\binom{[n]}{m}: \sup_{t\in T}\sum_{i\in I}t_i\geq 11\E_Y\sup_{t\in T}\sum_{i\in Y}t_i}$$
is $\frac{m}{9n}$-small.
\end{theo}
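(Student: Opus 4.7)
The plan is to derive Theorem \ref{malarodzina} from Theorem \ref{master} by contrapositive, in precisely the same way that Theorem \ref{main1} is equivalent to Theorem \ref{main2}. I would set $p:=m/(9n)$, so that the hypothesis $m\geq 9pn$ of Theorem \ref{master} is satisfied with equality; abbreviate $\tau:=\E_Y\sup_{t\in T}\sum_{i\in Y}t_i$; and assume for contradiction that the family
$$\ccF=\left\{I\in\binom{[n]}{m}:\sup_{t\in T}\sum_{i\in I}t_i\gs L\tau\right\}$$
is not $p$-small.

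For each $I\in\ccF$ I would invoke finiteness of $T$ to pick some $t^I\in T$ realizing the defining inequality $\sum_{i\in I}t^I_i\geq L\tau$, and then set
$$\mu_I(i):=\frac{t^I_i}{L\tau},\qquad i\in I.$$
These coefficients are nonnegative and satisfy $\sum_{i\in I}\mu_I(i)\geq 1$ by the choice of $t^I$, so the hypotheses of Theorem \ref{master} are exactly met. That theorem then gives the lower bound
$$\E_Y\sup_{I\in\ccF}\sum_{i\in I\cap Y}\mu_I(i)\;\gs\;\frac{1}{10}.$$

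To close the argument I would compare this with a trivial deterministic upper bound. For any realization of $Y$, positivity of $t^I$ lets me enlarge the summation range from $I\cap Y$ to $Y$, and then dominate by the supremum in $T$:
$$\sup_{I\in\ccF}\sum_{i\in I\cap Y}\mu_I(i)\;\leq\;\frac{1}{L\tau}\sup_{I\in\ccF}\sum_{i\in Y}t^I_i\;\leq\;\frac{1}{L\tau}\sup_{t\in T}\sum_{i\in Y}t_i.$$
Taking expectation in $Y$ produces $1/L$ on the right-hand side, so the two bounds combine to give $1/10\leq 1/L$, contradicting $L>10$. Hence $\ccF$ must be $p$-small.

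I do not expect any genuine obstacle: the one place demanding care is the upper-bound step, where both the nonnegativity of the coordinates $t_i$ (to justify dropping the constraint $i\in I$) and the membership $t^I\in T$ (to pass to the supremum over $T$) are used simultaneously. This is the precise analogue of the deduction of Theorem \ref{main1} from Theorem \ref{main2}, and no ingredient beyond the conditional Key Lemma encoded in Theorem \ref{master} is required.
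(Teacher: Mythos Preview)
Your proposal is correct and follows essentially the same route as the paper: assume $\ccF$ is not $\frac{m}{9n}$-small, normalize by $L\tau$ to obtain coefficients $\mu_I(i)$ summing to at least $1$, apply Theorem \ref{master} for the lower bound $1/10$, and compare with the trivial upper bound $1/L$ obtained by enlarging $I\cap Y$ to $Y$ and passing to the full supremum over $T$. The only cosmetic difference is that you spell out the role of nonnegativity and of $t^I\in T$ in the upper-bound step a bit more explicitly than the paper does.
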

\begin{proof}
Assume for contradiction that $\ccF$ is not $\frac{m}{9n}$-small. Let $S:=\E_Y\sup_{t\in T}\sum_{i\in Y}t_i$. For any $I\in\ccF$ we can find $t\in T$ such that for $\mu_I(i):=\frac{t_i}{11S}$, we have $\sum_{i\in I}\mu_I(i)\geq 1$, thus
$$\E_Y\sup_{I\in\ccF}\sum_{i\in I\cap Y}\mu_I(i)\leq\E_Y\sup_{t\in T}\sum_{i\in Y}\frac{t_i}{11S}=\frac{1}{11}<\frac{1}{10},$$
which contradicts Theorem \ref{master}.
\end{proof}
\begin{lema}\label{porsup}
Let $Y$ and $Y'$ be uniformly distributed over $\binom{[n]}{m}$ and $\binom{[n]}{km}$ respectively, where $k$ is a natural number such that $km\leq n$. Then
\[ \E_{Y'}\sup_{t\in T} \sum_{i\in Y'} t_i \leq k \E_{Y} \sup_{t\in T} \sum_{i\in Y} t_i.\]
\end{lema}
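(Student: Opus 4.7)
The plan is to exploit the fact that a uniformly chosen $km$-element subset of $[n]$ can be canonically decomposed into $k$ blocks of size $m$, each of which is itself marginally uniform on $\binom{[n]}{m}$, and then use subadditivity of the supremum of a sum of nonnegative terms.

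Concretely, I would augment the sample space as follows. Sample $Y'$ uniformly from $\binom{[n]}{km}$ and, independently conditional on $Y'$, pick a uniformly random ordered partition $(Y_1, \dots, Y_k)$ of $Y'$ into $k$ disjoint blocks of size $m$. The first claim is that each $Y_j$ has marginal distribution uniform on $\binom{[n]}{m}$. This is a routine double-counting: for any fixed $A \in \binom{[n]}{m}$,
\[
\P(Y_j = A) \;=\; \frac{\binom{n-m}{km-m}}{\binom{n}{km}} \cdot \frac{1}{\binom{km}{m}} \;=\; \frac{1}{\binom{n}{m}},
\]
so $Y_j \stackrel{d}{=} Y$.

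Next, since the coefficients $t_i \geq 0$ and the $Y_j$ partition $Y'$, for every $t\in T$
\[
\sum_{i\in Y'} t_i \;=\; \sum_{j=1}^k \sum_{i\in Y_j} t_i,
\]
and taking the supremum over $t$ inside the outer sum gives
\[
\sup_{t\in T} \sum_{i\in Y'} t_i \;\leq\; \sum_{j=1}^k \sup_{t\in T} \sum_{i\in Y_j} t_i.
\]
Taking expectations and using the marginal distribution identified above yields
\[
\E_{Y'} \sup_{t\in T} \sum_{i\in Y'} t_i \;\leq\; \sum_{j=1}^k \E \sup_{t\in T} \sum_{i\in Y_j} t_i \;=\; k\, \E_{Y} \sup_{t\in T} \sum_{i\in Y} t_i,
\]
which is the desired inequality.

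There is no genuine obstacle here; the only point requiring care is checking that the marginal of each block $Y_j$ is indeed uniform on $\binom{[n]}{m}$, which is the short combinatorial identity above. The rest is the trivial observation that the sup of a sum of nonnegative functions is at most the sum of the sups.
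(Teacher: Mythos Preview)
Your proof is correct and follows essentially the same approach as the paper: the paper also randomly splits $Y'$ into $k$ blocks of size $m$ (there via a uniform permutation $\pi$ of the indices $[km]$ of the sorted elements of $Y'$), observes that each block is marginally uniform on $\binom{[n]}{m}$, and then uses subadditivity of the supremum. Your version is slightly more explicit in verifying the marginal law, but the argument is the same; one small remark is that the subadditivity step $\sup_t\sum_j a_{j,t}\le\sum_j\sup_t a_{j,t}$ does not actually require nonnegativity of the $t_i$.
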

\begin{proof}
 Let $\pi$ be a random permutation uniformly distributed over all permutations of the set $[km]$. Let $y_1<y_2<
\ldots<y_{km} $ be the elements of $Y'$. Then
\begin{align*}
S'=\E_{\pi, Y'}\sup_{t\in T} \sum_{i=1}^{km}  t_{y_{\pi(i)}} \leq \sum_{l=0}^{k-1} \E_{\pi, Y}\sup_{t\in T} \sum_{i=lm+1}^{(l+1)m}  t_{y_{\pi(i)}}=kS.
\end{align*}
\end{proof}
\begin{prop}\label{remark1}
 Let $C\geq 1$, $Y$ be uniformly distributed over $\binom{[n]}{m}$ and
 \[S=\E_Y \sup_{t\in T} \sum_{i\in Y} t_i.\] 
 Then the family
    \[ \ccF=\cbr{I\in \binom{[n]}{m}: \sup_{t\in T} \sum_{i\in I} t_i \geq 100CS}\]
    is $\frac{Cm}{n}$-small.
\end{prop}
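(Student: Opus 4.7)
The plan is to mimic the proof template of Theorem \ref{malarodzina} (arguing by contradiction via Theorem \ref{master}) but to run the argument at a rescaled cardinality. Concretely, I would set $k=9C$ (treating $9C$ as a positive integer — otherwise replacing it by $\lceil 9C\rceil$ only changes the constant $90C+1$ by $O(1)$), let $m'=km$, and take $Y'$ uniform on $\binom{[n]}{m'}$. The observation driving the choice is that $m'=9\cdot(Cm/n)\cdot n$, which is exactly the threshold $m'\geq 9pn$ required by Theorem \ref{master} at $p=Cm/n$.

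Assume for contradiction that $\ccF$ is not $(Cm/n)$-small. For each $I\in\ccF$ I would pick $t^I\in T$ realizing $\sum_{i\in I}t^I_i\geq(90C+1)S$ and set $\mu_I(i)=t^I_i/((90C+1)S)$, so that $\mu_I(i)\geq 0$ and $\sum_{i\in I}\mu_I(i)\geq 1$. Theorem \ref{master}, applied with cardinality $m'$ to the family $\ccF$ equipped with these coefficients, then yields
\[\E_{Y'}\sup_{I\in\ccF}\sum_{i\in I\cap Y'}\mu_I(i)\geq \frac{1}{10}.\]

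On the other hand, using $\mu_I\geq 0$ to enlarge $I\cap Y'$ to $Y'$ and passing the supremum through $T$,
\[\E_{Y'}\sup_{I\in\ccF}\sum_{i\in I\cap Y'}\mu_I(i)\leq\frac{\E_{Y'}\sup_{t\in T}\sum_{i\in Y'}t_i}{(90C+1)S}\leq\frac{9CS}{(90C+1)S}<\frac{1}{10},\]
where the middle inequality is Lemma \ref{porsup} applied with the factor $k=9C$. The two bounds are in conflict, which forces $\ccF$ to be $(Cm/n)$-small.

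The only real obstacle is conceptual rather than computational: one must notice that smallness of the size-$m$ family $\ccF$ should be certified by running the conditional argument at the larger cardinality $m'=9Cm$, precisely because Lemma \ref{porsup} sacrifices a factor of $9C$ when passing from $S$ up to $\E_{Y'}\sup_{t}\sum_{i\in Y'}t_i$. The coefficient $90C+1$ in the statement is then exactly what is needed so that $9C/(90C+1)<1/10$, leaving a margin against the lower bound provided by Theorem \ref{master}.
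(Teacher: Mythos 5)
Your proof is correct and follows essentially the same route as the paper's: assume the family is not $\frac{Cm}{n}$-small, normalize $\mu_I(i)=t^I_i/((90C+1)S)$, apply Theorem \ref{master} at the enlarged cardinality $m'=km$ with $k\approx 9C$ (so that $m'\geq 9pn$ for $p=\frac{Cm}{n}$), and contradict the resulting lower bound $\frac{1}{10}$ via the upper bound $kS$ from Lemma \ref{porsup}. The integrality caveat you flag (taking $k=\lceil 9C\rceil$ may force a slight adjustment of the constant $90C+1$) is equally present in the paper's own argument, so there is no substantive difference between the two proofs.
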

\begin{proof}
 For any $I\in \binom{[n]}{m}$ we pick  $t^I\in T$ such that
 \[ \sup_{t\in T} \sum_{i\in I} t_i = \sum_{i\in I} t^I_i.\]
 Then
 \[\ccF = \cbr{I\in \binom{[n]}{m}: \sum_{i\in I} t^I_i \geq 100CS}. \]
 Assume that $\ccF$ is not $\frac{Cm}{n}$-small. Let $k:=\lceil 9C \rceil$. If $km>n$ then by Lemma \ref{porsup} family $\ccF$ is empty which is a contradiction. Therefore we may assume that $km\leq n$. Take $Y'$ uniformly distributed over $\binom{[n]}{km}$. By Theorem \ref{master}
 \begin{equation}\label{sprzecz1}
 \E_{Y'} \sup_{I\in \ccF} \sum_{i\in Y'} t^I_i \geq 10CS.
 \end{equation}
 On the other hand, by Lemma \ref{porsup} we have that
 \begin{equation}\label{sprecz2}
    \E_{Y'} \sup_{I\in \ccF} \sum_{i\in Y'} t^I_i\leq \E_{Y'} \sup_{t\in T} \sum_{i\in Y'}t_i \leq  k S.
 \end{equation}
Trivially \eqref{sprzecz1} and \eqref{sprecz2} give a contradiction so  $\ccF$ is $\frac{Cm}{n}$-small.
 
\end{proof}

\begin{proof}[Proof of Theorem \ref{twemp}]
We proceed as in the case of infinitely divisible processes gradually simplifying the functions $t\in T$. The proof is very similar but less technical. This is due to the fact that we are now working with the probabilistic measure $\mu$ instead of the (potentially) infinite measure $\nu$. For example, Step 1 in the proof of Theorem \ref{twpod} is now superfluous. Only the last step requires a different argument. For this reason, we decided to refer to the proof of Theorem \ref{twpod} in a few places, instead of repeating the technical arguments.\\
\noindent Throughout the proof we use the analogous notion of $\delta$-smallness from Definition \ref{deltasmall}, where we replace $|X|_g$ with $\frac{1}{d} \sum_{i=1}^{d} g(X_i)$. Let $S:=\E\sup_{t\in T}\sum_{i=1}^dt(X_i)$. and $M>0$ be a large number that will be precised later. In particular, we may assume that (since $T$ is finite)
\begin{equation}\label{duzeM}
    \sum_{t\in T} \mu(\{|t|\geq M\})\leq \frac{1}{16d}.
\end{equation}


\noindent {\bf Step 1.} 
Fix $N$ large enough. By repeating the argument from Step 3 of the proof of Theorem \ref{twpod} we construct $\all$ a family of disjoint subsets of $E$, together with coefficients $(t(A))_{t\in T,A\in \all}$ such that
\begin{align}\label{wysuma}
   \bigcup_{A\in \all} A =\bigcap_{t\in T}\{ x\in E:|t(x)|\leq M\}
   \end{align}
and for any $A\in\all$, $x\in A$,   
   \begin{align}\label{przybliz}
    t(A)-4^{-N}\leq t(x)\leq t(A).
\end{align}
Notice the difference between \eqref{wysuma} and \eqref{s-10-ineq2}. The reason is that $\mu$ is finite so we don't need to take care of the behavior of the measure around $0$. Let $X(A):=\sum_{i=1}^d\1_{\{X_i\in A\}}$.
Then \eqref{wysuma} and \eqref{przybliz} imply that
\[ \left\{\sup_{t\in T}\sum_{i=1}^d t(X_i) >KS \right\} \subset \left\{\sup_{t\in T}\sum_{A\in \all} t(A)X(A) >KS \right\} \cup \bigcup_{t\in T} \left\{\sum_{i=1}^d \1_{|t(X_i)|\geq M} \geq 1 \right\}. \]
Using Bernoulli's inequality and \eqref{duzeM}
\[\sum_{t\in T} \P\left(\cbr{\sum_{i=1}^d \1_{|t(X_i)|\geq M} \geq 1}\right)=\sum_{t\in T}\left(1-\mu(\{|t|\leq M \})^d\right)\leq d\sum_{t\in T} \mu(\{|t|>M \})\leq \frac{1}{16}. \]
So it is sufficient to show that the set $\{\sup_{t\in T}\sum_{A\in \all} t(A)X(A) >KS \}$ is $7/16$-small.

\noindent {\bf Step 2.} 
Let $\ccA_N:=\{A:A\in \all,\ \mu(A)>0$\} and $\ccN_N:=\{A: A\in \all,\ \mu(A)=0\}$. Then we get that
\begin{align*}
  \left\{\sup_{t\in T}\sum_{A\in \all} t(A)X(A) >KS \right\}\subset \Bigg( \left\{\sup_{t\in T}\sum_{A\in \ccA_N} t(A)X(A) >KS \right\}&\\
 \cup \bigcup_{A\in \ccN_N}\left\{ \sum_{i=1}^d \1_A(X_i)\geq 1\right\}&\Bigg).   
\end{align*}
By the definition of $\ccN_N$ we have $\sum_{A\in \ccN_N}\P(\sum_{i=1}^d \1_A(X_i)\geq 1)=0$. Also, by analogous argument as in Step 5 of the proof of Theorem \ref{twpod},
\[S \geq \frac{\E \sup_{t\in T} \sum_{A\in \ccA_N} t(A)X(A)  }{2}=:\frac{\bar{S}}{2}.\]
Thus, it is enough to show that the following set is $7/16$-small
\[\left\{\sup_{t\in T}\sum_{A\in \ccA_N} t(A)X(A) >K\bar{S} \right\}.\]
\noindent {\bf Step 3.} Let 
\begin{align}\label{eq:p2}
    p:=\frac{\min(1,\inf_{A\in \ccA_N }\mu(A))}{16d^2}>0.
\end{align} 
We can divide each $A\in \ccA_N$ into disjoint sets $A_1,\ldots,A_{m(A)},A_*$, where $m(A)=\lfloor \mu(A)/p\rfloor$, $\mu(A_1)=\ldots=\mu(A_{m(A)})=p$ and $\mu(A_*)<p$ (we recall that $\mu$ is atomless). Let (as before)
\[\rod:=\{A_k : A\in \ccA_N, k\leq m(A) \}. \]
From the definition of $p$ (and since $\ccA_N$ consists of disjoint sets)
\begin{equation}\label{dofin1}
 \sum_{A\in \ccA_N}\P(\sum_{i=1}^d \1_{A_*}(X_i) \geq 1 ) \leq d \sum_{A\in \ccA_N} \mu(A_*) \leq d \sum_{A\in \ccA_N} \frac{\mu(A)}{16d^2} \leq  \frac{1}{16}.   
\end{equation}
Similar to Step 6 of the proof of Theorem \ref{twpod}, we show that the sets $A^*$ are negligible, so it is sufficient to show that the event 
\begin{equation}\label{tenzbior}
   \cbr{\sup_{t\in T}\sum_{B\in\rod}t(B)X(B)\geq K\bar{S} } 
\end{equation}
is $5/16$-small, where $(t(B))_{B\in \rod}$ are defined as in \eqref{eq:deftb}.\\
\noindent {\bf Step 4.} In this step we replace the variables $(X(B))_{B\in \rod}$ by $0-1$ variables (but not independent ones). By the construction, the sets $(B)_{B\in \rod}$ are disjoint and for any $B\in \rod$ $\mu(B)=p$. So, if $n=|\rod|$ we have
\begin{align}\label{eq:np}
 np=\sum_{B\in \rod} \mu(B) \leq 1.   
\end{align}
Now, observe that
\[\{X(B)>1\}=\cbr{\sum_{i=1}^d \1_{B}(X_i)\geq 2 },\]
so (as before) for
$$\ccE:=\cbr{\sup_{t\in T}  \sum_{B\in\rod}  t(B)X(B)\geq K\bar{S}}\cap\bigcap_{B\in \rod}\{X(B)\in\{0,1\}\} $$
we have
$$\cbr{\sup_{t\in T}  \sum_{B\in\rod}t(B)X(B)\geq K\bar{S}}\subset\ccE\cup \bigcup_{B\in \rod} \cbr{\sum_{i=1}^d \1_{B}(X_i)\geq 2}.$$
Since $(1-p)^d\geq 1-pd$, we obtain
\begin{equation}\label{dofin2}
  \sum_{B\in\rod}\P\left(\sum_{i=1}^d \1_{B}(X_i)\geq 2 \right)=n(1-(1-p)^d-dp(1-p)^{d-1})\leq np^2d^2\leq \frac{1}{16}, 
\end{equation}
where the last inequality follows from \eqref{eq:np} and \eqref{eq:p2}. By the same argument as in Step 6 of the proof of Theorem \ref{twpod}
\begin{align*}
 \bar{S}=\E\sup_{t\in T}\sum_{A\in\ccA_N}t(A)X(A)\geq \E\sup_{t\in T}\sum_{B\in\rod}t(B)X(B) =:S'.
\end{align*}
So it is enough to show that the following event is $1/4$-small
\begin{equation}\label{defeprim}
\ccE':=\cbr{\sup_{t\in T}  \sum_{B\in\rod}  t(B)X(B)\geq KS'}\cap\bigcap_{B\in \rod}\{X(B)\in\{0,1\}\}.     
\end{equation}
\noindent \textbf{Final Step.}
Recall that $\binom{\rod}{d}$ is a family of $d$ element subsets of $\rod$. Let $\ccY$ be uniformly chosen from $\binom{\rod}{d}$ and let $\ccX\in 2^\rod$ be a random set defined as $\ccX:=\{B\in \rod: X(B)\geq 1\}$. We claim that 
\begin{equation}\label{X-Y}    
\frac{8}{7} \E \sup_{t\in T} \sum_{B \in \rod} t(B)X(B)=\frac{8}{7} \E \sup_{t\in T} \sum_{B \in \ccX} t(B)X(B)\geq \E\sup_{t\in T} \sum_{B\in \ccY} t(B). 
\end{equation}
By \eqref{dofin1}, \eqref{dofin2} and the union bound
\begin{multline*}
    \P\left(|\ccX|\neq d \textrm{ or } \exists_{B\in \rod} X(B)\geq 2\right)\\
    \leq \sum_{A\in \ccA_n} \P\left(\sum_i \1_{A_\ast}(X_i) \geq 1 \right) +\sum_{B\in \rod} \P\left(\sum_i \1_{B}(X_i) \geq 2\right)\leq \frac{1}{8}.
\end{multline*}
Thus,
\[\E \sup_{t\in T} \sum_{B \in \ccX} t(B)X(B)\geq \frac{7}{8} \E \left( \sup_{t\in T} \sum_{B \in \ccX} t(B)X(B) \mid |\ccX|=d,\ \forall_{B\in \rod} X(B)\in \{0,1\}\right).   \]
Now, if we take any $\ccI=\{B_1,\ldots,B_d\}\in \binom{\rod}{d}$ and any other $\ccJ\in \binom{\rod}{d}$ we see that
\[\P(\ccX=\ccI)=\sum_{\pi\in \mathrm{Perm}([d])} \P(X_{\pi(1)}\in B_1,\ldots,X_{\pi(d)}\in B_d )=d!p^d=\P(\ccX=\ccJ).  \]
Thus $\ccX$ conditioned on $\{|\ccX|=d,\ X(B)\in \{0,1\} : B\in \rod \}$ has the same distribution as $\ccY$ which concludes the argument for \eqref{X-Y}.\\
\noindent Now, by Proposition \ref{remark1} there exists a $\frac{4ed}{n}$-small family $\ccG$  consisting of the sets $G=\{B_1,B_2,\ldots,B_{|G|}: B_i\in \rod \}$ such that
\begin{multline*}
  \cbr{\ccI\in \binom{\rod}{d} : \sup_{t\in T} \sum_{B\in \ccI}t(B) \geq 400e \E \sup_{t\in T} \sum_{B \in \rod} t(B)X(B) }\\
  \subset \cbr{\ccI\in \binom{\rod}{d} : \sup_{t\in T} \sum_{B\in \ccI}t(B) \geq 400e \E \sup_{t\in T} \sum_{B\in \rod} t(B)X(B)}
  \subset \bigcup_{G\in \ccG} \bigcap_{B\in G} 
 \{\ccI : B\in \ccI\}  
\end{multline*}
and $\sum_{G\in\ccG}\left(\frac{4ed}{n}\right)^{|G|}\leq\frac{1}{2}$.
Then clearly (remembering the definition \eqref{defeprim} of $\ccE'$)
\[\ccE'\subset  \bigcup_{G\in \ccG} \{X(B)=1, B\in G \} \cap \{X(B)\in\{0,1\};B\in \rod \}. \]
We define $g_G(x):=\sum_{B\in G} \1_B(x)$ and see that
\[\{X(B)=1, B\in G \} \cap \{X(B)\in\{0,1\};\; B\in \rod \}\subset \left\{\sum_{i=1}^{d} g_G(X_i)\geq |G| \right\}.\]
Thus, it is enough to show that
\[\sum_{G\in \ccG}\P\left(\sum_{i=1}^{d} g_G(X_i)\geq |G|\right)\leq 1/4. \]
The variables $(g_G(X_i))_{i\leq d}$ are independent Bernoulli random variables with a probability of success equal to $|G|p$. Hence,
\begin{align*}
\P\left(\sum_{i\leq d} g_G(X_i)\geq |G|\right)\leq \binom{d}{|G|}(p|G|)^{|G|}\leq (edp)^{|G|},
\end{align*}
so the result follows for sufficiently small $p$.
\end{proof}

\section*{Appendix}
The aim of this section is to present how the function $F$ of Definition \ref{jotyiepsilony} can be used in a more general setting. For the sake of clarity, and since the proof steps are conceptually not far from those presented in Section $3$, we will not go into every detail of the argument.\\
\noindent Consider independent random variables $X_1,X_2,\ldots,X_d$ distributed in the set $[n]$ 
according to the same probability distribution, that is, $\P(X_i=j)=p(j)>0$, $j\in [n]$. Let $T\subset \R_{+}^d$
and let $f:[n]\ra \R_{+}$. We assume that $T$ is permutationally invariant i.e., for any permutation $\sigma:[d]\ra [d]$  and any $t\in T$ we have $(t_{\sigma(i)})^d_{i=1}\in T$.
We are interested in
\[
S(T):=\E\sup_{t\in T}\sum^d_{i=1}t_i f(X_i)=\E \sum^d_{i=1}t_i^X f(X_i),
\]
where for $x\in[n]^d$, $t^x\in T$ is defined as the point at which $\sup_{t\in T}\sum^d_{i=1}t_i f(x_i)$ is attained and $t^X$ is its random counterpart. Since $T$ is permutationally invariant, we see that for any permutation $\sigma$ and $X_{\sigma}=(X_{\sigma(i)})^d_{i=1}$,
we can choose $t^{X_{\sigma}}=t^X(\sigma)$,  where $t^X_i(\sigma)=t^X_{\sigma(i)}$ for $i\in [d]$.
\smallskip

\noindent
For $x\in[n]^d$ and $1\leq j\leq n$ let $s^x_d(j):=|\{i\in[d]: x_i=j\}|$ and let us call $s^x_d:=(s^x_d(j))_{j=1}^n$ the multiset generated by $x$. In general, we refer to an element of $[d]^n=\{0, 1,\dots, d\}^n$ as the multiset. Let $\ccM_d:=\{s=(s(j))_{j=1}^n\in\{0, 1,\dots, d\}^n: \sum^n_{j=1}s(j)=d\}$, which we refer to as the collection of multisets of length $d$. Observe that the multiset $s^X_d$ generated by $X=(X_1,\dots, X_d)$ is the random element of $\ccM_d$ distributed according to 
\begin{equation}\label{defmulti}
\P(s^X_d=s)=d!\prod_{j=1}^n\frac{p(j)^{s(j)}}{s(j)!}.
\end{equation}
We will need the following coefficients
\[
\mu^{s^x_d}_j:=\frac{1}{d! s^x_d(j)}\sum_{\sigma} \sum^d_{i=1} t^x_{\sigma(i)} f(j) \1_{x_{\sigma(i)}=j},\ j=1,\ldots,n.
\]
The following simple observation is a starting point for our result.
\begin{lema}
It holds that
\[
S(T)=\E \sum^n_{j=1}\mu^{s^X_d}_j s^X_d(j).
\]
\end{lema}
\begin{proof}
We have 
\begin{align*}
&S(T)=
\E \sum^d_{i=1}t^X f(X_i)= \E \sum^n_{j=1} \frac{1}{s^X_d(j)} \sum^d_{i=1} t^X_{i} f(j) \1_{X_i=j} s^X_d(j) \\
& =\E \sum^n_{j=1} \frac{1}{d!s^X_d(j)}\sum_{\sigma}\sum^d_{i=1} t^X_{\sigma(i)}f(j) \1_{X_{\sigma(i)}=j} s^X_d(j)\\
&=\E \sum^n_{j=1}\mu^{s^X_d}_j s^X_d(j).
\end{align*}    
\end{proof}
\begin{defi}
Let $\ccH, \ccG \subset [n]^d $ be families of multisets. We say that $\ccG$ is a cover of $\ccH$ if for every multiset $s\in\ccH$ there exists $w\in\ccG$ such that for $1\leq j\leq n$, $w(j)\leq s(j)$.
\end{defi}
\begin{defi}\label{A(w)}
    We say that the family $\ccH$ of multisets is $\delta$- small if there exists a cover $\ccG$ of $\ccH$, which satisfies
\[
\sum_{w\in \ccG}A(w) \leq \delta,\;\;\mbox{where}\;\; A(w):=\frac{\prod^n_{j=1}(edp(j))^{w(j)}}{\prod^n_{j=1}w(j)!}.
\] 
\end{defi}
\begin{theo}\label{tw.per}
 There exists a constant $K>0$ such that the family \[
\tilde{\ccF}=\cbr{s^{x}_d\in\ccM_d,\;\; \sum^n_{j=1}\mu^{s^x_d}_j s^x_d(j) \geq KS(T)}\]  is $1/2$-small.   
\end{theo}
\noindent The following result shows that $A(w)$ is a natural quantity, which leads to the characterization of the small cover of $\tilde{\ccF}$ in terms of simple witnessing events, which is in line with the previous results (Theorem \ref{twpod} and Theorem \ref{twemp}). See also the discussion on p.1310 in \cite{Park}.
\begin{theo}
    
    Let $\ccG$ be the cover of $\tilde{\ccF}$ from Theorem \ref{tw.per}. For fixed $w\in\ccG$, consider the event $$E=\{s^X_d\in\ccM_d: w(j)\leq s^X_d(j), 1\leq j\leq n\}.$$
    Then there exists a function $g_w$ and a number $u_w$ such that $E\subset\{\sum_{i=1}^d g_w(X_i)\geq u_w\}$, where $X=(X_1,\dots, X_d)$ generates $s^{X}_d\in E$. Moreover,
    $$\sum_{w\in\ccG}\P(\sum_{i=1}^d g_w(X_i)\geq u_w)\leq\frac{1}{2}.$$
  \end{theo}
\begin{proof}
Suppose that $w\in\ccG$. Define $$g_w(j)=\log(1+\frac{w(j)}{dp(j)}) \;\;\mbox{and}\;\; u_w=\sum_{j=1}^n w(j)\log(1+\frac{w(j)}{dp(j)}).$$ Then $X$ generating $s_d^X\in E$ satisfies
\begin{align*}
    \sum_{i=1}^d g_w(X_i)&=\sum_{j=1}^ns_d^X(j)g_w(j)
    =\sum_{j=1}^n s^X_d(j)\log\rbr{1+\frac{w(j)}{dp(j)}}\\
    &\geq\sum_{j=1}^n w(j)\log\rbr{1+\frac{w(j)}{dp(j)}}=u_w,
\end{align*}
so $E\subset\{\sum_{i=1}^d g_w(X_i)\geq u_w\}$. Now, we show that 
$$\P(\sum_{i=1}^d g_w(X_i)\geq u_w)\leq A(w).$$
Suppose that $\sum_{j=1}^n w(j)=t$. By the definition of the cover we have that $t\leq d$. By Markov's inequality, we obtain 
\begin{align*}
& \P(\sum^d_{i=1}g_w(X_i)\geq u_w) =\P(\exp(\sum^d_{i=1}g_w(X_i))\geq e^{u_w})\leq [\E \exp(g_w(X_1))]^d e^{-u_w}.
\end{align*}
Now,
\begin{align*}
    e^{-u_w}&=e^{-\sum_{j=1}^n w(j)\log\rbr{1+\frac{w(j)}{dp(j)}}}=\prod_{j=1}^n e^{-w(j)\log\rbr{1+\frac{w(j)}{dp(j)}}}\\
    &=\prod_{j=1}^n\rbr{\frac{dp(j)}{dp(j)+w(j)}}^{w(j)}\leq\prod_{j=1}^n\rbr{\frac{dp(j)}{w(j)}}^{w(j)}
\end{align*}
and
\begin{align*}
\E\exp(g_w(X_1))&=\E\exp\rbr{\log\rbr{1+\frac{w(X_1)}{dp(j)}}}=\E\rbr{1+\frac{w(X_1)}{dp(j)}}\\
&=1+\sum_{j=1}^n\frac{w(j)}{dp(j)}p(j)=1+\frac{t}{d}.
\end{align*}
Putting the above together gives
\begin{align*}
\P(\sum^d_{i=1}g_w(X_i)\geq u_w)&\leq (1+\frac{t}{d})^d\prod^{n}_{j=1}\rbr{\frac{dp(j)}{w(j)}}^{w(j)}\leq e^t \prod^{n}_{j=1}\rbr{\frac{dp(j)}{w(j)}}^{w(j)}\\
&=\prod^{n}_{j=1}\rbr{\frac{edp(j)}{w(j)}}^{w(j)}\leq A(w),
\end{align*}
since $w(j)^{w(j)}\geq w(j)!$.  So, by Theorem \ref{tw.per},
$$\sum_{w\in\ccG}\P(\sum_{i=1}^d g_w(X_i)\geq u_w)\leq\frac{1}{2}.$$
\end{proof}
\noindent To shorten the notation, we write $S_d$ for $s^X_d$. Consider $S_{Cd}$ for some $C>0$, which we define as the multiset of length $Cd$ generated by $Y^1, \dots, Y^C$, independent copies of $X$. Clearly, $S_{Cd}$ is distributed on $\ccM_{Cd}$ according to
\[\P(S_{Cd}=s)=(Cd)!\prod_{j=1}^n\frac{p(j)^{s(j)}}{s(j)!}.\]
We aim to prove the following.
\begin{theo}\label{permu}
    Suppose that $\ccF\subset\ccM_{d}$ is not $\frac{1}{2}$- small and that for every $s\in\ccF$ we are given coefficients $\mu^s_j\geq 0$ satisfying $\sum_{j=1}^n \mu_j^s s(j)\geq 1$. Then, there exists $C>0$ such that
    $$\E\sup_{s\in\ccF}\sum_{j=1}^n\mu_j^s S_{Cd}(j)\geq\frac{1}{4},$$
    where $S_{Cd}$ is distributed as above.
\end{theo}

\begin{proof}[Proof of Theorem \ref{tw.per}]
Suppose for contradiction that $\tilde{\ccF}$ is not $1/2$- small.
Let $$\tilde{S}(T)=\E\sup_{s_d^{x}\in\tilde{\ccF}}\sum_{j=1}^n\mu_j^{s^x_d}S_{Cd}(j).$$ Consider coefficients $\tilde{\mu}_j^x:=\frac{\mu_j^{s_d^x}}{KS(T)}$. So, for $s_d^x\in\tilde{\ccF}$, we have $\sum_{j=1}^n \tilde{\mu}_j^x s_d^x(j)\geq 1$. Then, by the argument similar to that of Lemma \ref{porsup}, and by Theorem \ref{permu}, we have 
$$CS(T)\geq \tilde{S}(T)=KS(T)\E\sup_{s_d^x\in\tilde{\ccF}}\sum_{j=1}^n\tilde{\mu}_j^xS_{Cd}(j)\geq\frac{1}{4}KS(T),$$
which is a contradiction for $K>4C$.
\end{proof}

\noindent From now on, we fix the family $\ccF$ from the Theorem \ref{permu}. We need the following definition.
\begin{defi}
Fix $C \in \N$  and  $c\in (0,1)$. We say that $s\in \ccM_{Cd}$ is bad if 
\[
\sup_{r\in \ccF}\sum^n_{j=1}\mu^r(j)s(j) < c.
\]
\end{defi}
\noindent The proof of Theorem \ref{permu} is based on the following result which is analogue of the pivotal Lemma \ref{zly}.
\begin{lema}\label{key2}
Let $c=\frac{1}{2}$. There exists $C>0$ such that
    $$\P(S_{Cd}\;\;\mbox{is bad})\leq\frac{1}{2}.$$
\end{lema}
\begin{proof}[Proof of Theorem \ref{permu}]
Conditioning on $S_{Cd}$ being not bad gives
 $$\E\sup_{s\in\ccF}\sum_{j=1}^n\mu_j^s S_{Cd}(j)\geq\frac{1}{2}\P(S_{Cd}\;\;\mbox{is  not bad})\geq\frac{1}{4},$$
 where in the last inequality we use Lemma \ref{key2}. 
\end{proof}

\noindent The rest of this section is devoted to the proof of Lemma \ref{key2}. Observe that without loss of generality we can normalize $\mu^{s}_j$ for every $s\in\ccF$ so that $\sum_{j=1}^n\mu^s_j s(j)=1$. In particular, $0\leq\mu^s_j\leq 1$.
\begin{lema}\label{lematepsilon}
Fix $r\in\ccF$ and $s\in\ccM_{Cd}$ which is bad. Then, there exists $\varepsilon(r,s) \in [0,1]$ such that
\begin{equation}\label{epsilony'}
\sum^{n}_{j=1} \1_{\{\mu^r_j> \va(r,s)\}}s(j)  \leq c\sum^{n}_{j=1} \1_{\{\mu^r_j> \va(r,s)\}}r(j).
\end{equation}
\end{lema}
\begin{proof}
Define
\begin{equation}\label{F}
F_{r,s}(\va):=\sum^{n}_{j=1}\rbr{\mu^r_j\wedge \va }s(j)
- c \sum^{n}_{j=1} \rbr{\mu^r_j\wedge \va }r(j).
\end{equation}
Obviously,  $F_{r,s}(0)=0$ and  since $s$ is bad we have 
\begin{align*}
F_{r,s}(1)=\sum_{j=1}^n\mu_j^r s(j)-c\sum_{j=1}^n\mu^r_j r(j)<c-c=0.
\end{align*}
Define $\va(r,s):=\sup\{\va\in[0,1):F_{r,s}(\va)\geq0\}$.
Following the proof of Lemma \ref{lema1}, we check that $\va(r,s)$ satisfies \eqref{epsilony'}.
\end{proof}
\begin{defi}
    For fixed $r\in\ccF$ and $s\in\ccM_{Cd}$ bad, we define $\va(r,s)$ as the largest number such that $F_{r,s}(\va(r,s))\geq 0$ (recall \eqref{F}) i.e. the largest number for which it holds that
\begin{equation}\label{epsilony}
\sum^{n}_{j=1} \1_{\{\mu^r_j> \va(r,s)\}}s(j)  \leq c\sum^{n}_{j=1} \1_{\{\mu^r_j> \va(r,s)\}}r(j).
\end{equation}
\end{defi}
\begin{defi}[Witness]\label{witness}
Fix $r\in\ccF$ and $s\in\ccM_{Cd}$ bad. Let $A(r,s):=\{j\in [n]:\; \mu^r_j>\va(r,s)\}$ and 
\[
 j(r,s):=\sum_{j\in A(r,s)}r(j).
\]
We say that $\hat{r}\in\ccF$ is admissible for $(r,s)$ if 
\begin{equation}\label{admis}
\forall_{j\in A(\hat{r},s)}\; \hat{r}(j)-s(j)\leq r(j).
\end{equation}
Let 
\[
t(\hat{r},s):=\sum_{j\in A(\hat{r},s)}(\hat{r}(j)-s(j))_{+}=\sum_{j\in A(\hat{r},s)}\max\left(\hat{r}(j)-s(j),0\right).
\]
We choose from all $\hat{r}\in \ccF$ admissible for $(r,s)$ the one with the smallest $j(\hat{r},s)$ and then with the smallest $t(\hat{r},s)$. We denote the chosen element (any) as $r_{\ast}$ and call it a witness for $(r,s)$ (with a small abuse of notation).
\end{defi}
\begin{rema}\label{j>t}
In the same way as for \eqref{ogrt}, we deduce from Lemma \ref{lematepsilon} that
\[
j(r_{\ast}, s)\geq t(r_{\ast},s) \geq (1-c)j(r_{\ast}, s).
\]
\end{rema}
\noindent Before we present the cover of $\ccF$ we need some notation. For two multisets $x,y \in [n]^d$ we write $x+y$ for the multiset $(x(j)+y(j))_j$, $x-y$ for $(\max\{x(j)-y(j),0\})_j$ and for some $A \subset [n]$ we write $x\1_{A}$ to denote the multiset with entries $x(j)$ if $j\in A$ and $0$ otherwise.
We also write $x\leq y$ if $x(j)\leq y(j)$ for every $j$.\\ 
\noindent
We construct the cover of $\ccF$ with multisets $(r_{\ast}-s)\1_{A(r_{\ast},s)}$, where we first fix bad $s\in\ccM_{Cd}$ and then we proceed with the construction of $r_{\ast}$ as in the Definition \ref{witness}. These are analogues of the sets $W(I,X)\setminus X$ of Section $3$. Define
\begin{equation}\label{cover1}
    \ccG(s):=\{(r_{\ast}-s)\1_{A(r_{\ast},s)}:r\in\ccF\}.
    \end{equation}
We see that $\ccG(s)$ is a cover of $\ccF$, because if $j\in A(s,r_{\ast})$, then $(r_{\ast}(j)-s(j))\1_{A(r_{\ast},s)}\leq r(j)$ by \eqref{admis} and otherwise $(r_{\ast}(j)-s(j))\1_{A(r_{\ast},s)}=0\leq r(j)$.
\begin{lema}\label{lemat 1}
Let $s, s'\in\ccM_{Cd}$ be bad and let $r,r'\in\ccF$. Suppose that $r_{\ast}$ is a witness of $(r,s)$ and that $r_{\ast}'$ is a witness of $(r',s')$. Assume that
\begin{enumerate}
\item $\tilde{j}:=j(r,s)=j(r',s')$,
\item $z:=s+(r_{\ast}-s)\1_{A(r_{\ast},s)}=s'+(r'_{\ast}-s')\1_{A(r'_{\ast}, s')}$,
\item $t:=t(r,s)=t(r',s')$.
\end{enumerate} 
Then, $\va(r_{\ast},s')\geq\va(r_{\ast}, s)$.
\end{lema}
\begin{proof}
We have to prove that $F_{r_{\ast},s'}(\va(r_{\ast},s))\geq 0$ (recall the definition \eqref{F}). Since $\va(r_{\ast}, s')$ is the largest number for which $F_{r_{\ast}, s'}$ is non-negative, we then conclude that $\va(r_{\ast}, s')\geq \va(r_{\ast}, s)$. By the definition of $A(r_{\ast}, s)$ and by adding and subtracting $s(j)$ we obtain
\begin{align*}
&\sum_{j=1}^n(\mu_j^{r_{\ast}}\wedge\va(r_{\ast},s))s'(j)=\va(r_{\ast},s)\sum_{j\in A(r_{\ast}, s)}s'(j)+\sum_{j\in A^c(r_{\ast}, s)}\mu_j^{r_{\ast}}s'(j)\\
&=\va(r_{\ast},s)\sum_{j\in A(r_{\ast}, s)}(s'(j)-s(j))+\va(r_{\ast},s)\sum_{j\in A(r_{\ast}, s)}s(j)\\
&+\sum_{j\in A^c(r_{\ast}, s)}\mu_j^{r_{\ast}}(s'(j)-s(j))+\sum_{j\in A^c(r_{\ast}, s)}\mu_j^{r_{\ast}}s(j)\\
&=\sum_{j=1}^n(\mu_j^{r_{\ast}}\wedge\va(r_{\ast},s))s(j)+\va(r_{\ast},s)\sum_{j\in A(r_{\ast}, s)}(s'(j)-s(j))+\sum_{j\in A^c(r_{\ast}, s)}\mu_j^{r_{\ast}}(s'(j)-s(j))\\
&=I+II+III.
\end{align*}
Notice that $I\geq c\sum_{j=1}^n(\va(r_{\ast},s)\wedge\mu_{j}^{r_{\ast}})r_{\ast}(j)$, so it is enough to show that
$II+III\geq 0$. We have that
\begin{align*}
&II=\va(r_{\ast},s)\sum_{j\in A(r_{\ast}, s)\cap A(r'_{\ast}, s')}(s'(j)-s(j))+\va(r_{\ast},s)\sum_{j\in A(r_{\ast}, s)\cap A^{c}(r'_{\ast}, s')}(s'(j)-s(j))\\
&=A+B.
\end{align*}
Now, assumption 2 can be written as
$$z=s\1_{A^{c}(r_{\ast},s)}+(r_{\ast}\vee s)\1_{A(r_{\ast},s)}=s'\1_{A^{c}(r'_{\ast},s')}+(r'_{\ast}\vee s')\1_{A(r'_{\ast},s')}.$$
Observe that for $j\in A(r_{\ast}, s)\cap A(r'_{\ast}, s')$, by the assumption $2$, we have that $r_{\ast}'(j)\vee s'(j)=r_{\ast}(j)\vee s(j)$, so we can write
$$A=\va(r_{\ast},s)\sum_{j\in A(r_{\ast}, s)\cap A(r'_{\ast}, s')}(s'(j)-(r_{\ast}'(j)\vee s'(j))+(r_{\ast}(j)\vee s(j))-s(j)).$$
For $j\in A(r_{\ast}, s)\cap A^{c}(r'_{\ast}, s')$, by the assumption $2$, we have that $s'(j)=r_{\ast}(j)\vee s(j)$, so
$$B=\va(r_{\ast},s)\sum_{j\in A(r_{\ast}, s)\cap A^{c}(r'_{\ast}, s')}((r_{\ast}(j)\vee s(j))-s(j)).$$
Therefore,
\begin{align*}
&II=A+B\\
&=\va(r_{\ast},s)\sum_{j\in A(r_{\ast}, s)}((r_{\ast}(j)\vee s(j))-s(j))+\va(r_{\ast},s)\sum_{j\in A(r_{\ast}, s)\cap A(r'_{\ast}, s')}(s'(j)-(r_{\ast}'(j)\vee s'(j)))\\
&=\va(r_{\ast},s)\sum_{j\in A(r_{\ast}, s)}(r_{\ast}(j)-s(j))_{+}-\va(r_{\ast},s)\sum_{j\in A(r_{\ast}, s)\cap A(r'_{\ast}, s')}(r'_{\ast}(j)-s'(j))_{+}.
\end{align*}
Similarly, by noticing that $s(j)=s'(j)$ whenever $j\in A^{c}(r_{\ast},s)\cap A^{c}(r'_{\ast},s')$, by the assumption 2, we get that
\begin{align*}
III&=\sum_{j\in A^c(r_{\ast}, s)}\mu_j^{r_{\ast}}(s'(j)-s(j))=\sum_{j\in A^c(r_{\ast}, s)\cap A^c(r'_{\ast}, s')}\mu_j^{r_{\ast}}(s'(j)-s(j))\\
&+\sum_{j\in A^c(r_{\ast}, s)\cap A(r'_{\ast}, s')}\mu_j^{r_{\ast}}(s'(j)-s(j))\\
&=\sum_{j\in A^c(r_{\ast}, s)\cap A(r'_{\ast}, s')}\mu_j^{r_{\ast}}(s'(j)-s(j)).
\end{align*}
On the set $A^c(r_{\ast}, s)\cap A(r'_{\ast}, s')$, by the assumption 2, we have $s(j)=(r'_{\ast}(j)\vee s'(j))$, so
\begin{align*}
III&=\sum_{j\in A^c(r_{\ast}, s)\cap A(r'_{\ast}, s')}\mu_j^{r_{\ast}}(s'(j)-s(j))=\sum_{j\in A^c(r_{\ast}, s)\cap A(r'_{\ast}, s')}\mu_j^{r_{\ast}}(s'(j)-(r'_{\ast}(j)\vee s'(j)))\\
&=-\sum_{j\in A^c(r_{\ast}, s)\cap A(r'_{\ast}, s')}\mu_j^{r_{\ast}}(r'_{\ast}(j)-s'(j))_{+}.
\end{align*}
Finally,
\begin{align*}
II+III=\va(r_{\ast},s)\sum_{j\in A(r_{\ast}, s)}(r_{\ast}(j)-s(j))_{+}&-\va(r_{\ast},s)\sum_{j\in A(r_{\ast}, s)\cap A(r'_{\ast}, s')}(r'_{\ast}(j)-s'(j))_{+}\\
&-\sum_{j\in A^c(r_{\ast}, s)\cap A(r'_{\ast}, s')}\mu_j^{r_{\ast}}(r'_{\ast}(j)-s'(j))_{+}.
\end{align*}
For $j\in A^{c}(r_{\ast},s)$, $\mu_j^{r_{\ast}}\leq\va(r_{\ast}, s)$. Hence, by the assumption 3,
\begin{align*}
II+III&\geq\va(r_{\ast},s)\sum_{j\in A(r_{\ast}, s)}(r_{\ast}(j)-s(j))_{+}-\va(r_{\ast},s)\sum_{j\in A(r'_{\ast}, s')}(r'_{\ast}(j)-s'(j))_{+}\\
&=\va(r_{\ast},s)t-\va(r_{\ast},s)t=0.
\end{align*}
\end{proof}
\begin{lema}\label{lemat 2}
Under the assumptions of Lemma \ref{lemat 1} it holds that
\begin{equation}\label{zawieranie1}
    (r_{\ast}-s')\1_{A(r_{\ast},s')}=(r'_{\ast}-s')\1_{A(r'_{\ast},s')}.
\end{equation}
\end{lema}
\begin{proof}
In order to prove \eqref{zawieranie1} we need to prove that $\va(r_{\ast},s')=\va(r_{\ast},s)$. Due to Lemma \ref{lemat 1} it is enough to show that $\va(r_{\ast},s')\leq\va(r_{\ast},s)$. Assume for contradiction that $\va(r_{\ast},s')>\va(r_{\ast},s)$. 
By the same argument as in the proof of Lemma \ref{zaw}, we deduce that  $A(r_{\ast},s')\subset A(r_{\ast},s)$ and, in turn, that $j(r_{\ast},s')<j(r_{\ast},s)$. We have two cases. Either $j\in A(r_{\ast},s')\cap A^{c}(r'_{\ast},s')$ or $j\in A(r_{\ast},s')\cap A(r'_{\ast},s')$.\\ 
\noindent \textbf{Case 1.} Suppose that $j\in A(r_{\ast},s')\cap A^{c}(r'_{\ast},s')$. Since, $A(r_{\ast},s')\subset A(r_{\ast},s)$ and by the assumption $2$, we have
$$r_{\ast}(j)\vee s(j)=s'(j).$$
In particular, $s'(j)\geq r_{\ast}(j)$, so 
$$(r_{\ast}(j)-s'(j))_{+}=0\leq r'_{\ast}(j).$$
\textbf{Case 2.} Suppose that $j\in A(r_{\ast},s')\cap A(r'_{\ast},s')$. Again, since $A(r_{\ast},s')\subset A(r_{\ast},s)$ and by the assumption $2$, we have 
$$r_{\ast}(j)\vee s(j)=r'_{\ast}(j)\vee s'(j).$$
We have two cases. If $s'(j)\geq r_{\ast}'(j)$, then $(r_{\ast}(j)\vee s(j))=s'(j)$, so $r_{\ast}(j)\leq r_{\ast}(j)\vee s(j)=s'(j)$, and hence, as before, 
$$(r_{\ast}(j)-s'(j))_{+}=0\leq r'_{\ast}(j).$$
If $s'(j)<r'_{\ast}(j)$, then $r'_{\ast}(j)\geq r_{\ast}(j)$, which implies that 
$$(r_{\ast}(j)-s'(j))_{+}\leq(r'_{\ast}(j)-s'(j))_{+}\leq r_{\ast}'(j).$$
In conclusion we have that for $j\in A(r_{\ast},s')$ it holds that 
$$(r_{\ast}(j)-s'(j))_{+}\leq r'_{\ast}(j),$$
which means that $r_{\ast}$ is admissible for $(r',s')$. Moreover, we have $j(r_{\ast},s')<j(r_{\ast},s)$. But, by the assumption $1$, $j(r_{\ast},s)=j(r'_{\ast},s')$, which contradicts the assumption that $r'_{\ast}$ is a witness for $(r',s')$ i.e. that $j(r'_{\ast},s')$ is the smallest possible, in particular, that $j(r'_{\ast},s')\leq j(r_{\ast},s')$. Hence, $\va(r_{\ast},s')\leq\va(r_{\ast},s)$, which together with Lemma \ref{lemat 1} gives $\va(r_{\ast},s')=\va(r_{\ast},s)$.\\
\noindent 
The main consequence of the fact proved above is that $A(r_{\ast},s')=A(r_{\ast},s)$. \\
\noindent To prove \eqref{zawieranie1} we again consider two cases.\\
\noindent \textbf{Case 1.} Suppose that $j\in A(r_{\ast},s)\cap A^{c}(r'_{\ast},s')$. Then, by the assumption 2, we have that $s'(j)=s(j)\vee r_{\ast}(j)$ and, in turn,
$$(r_{\ast}(j)-s'(j))_{+}\leq (r_{\ast}(j)\vee s(j)-s'(j))_{+}=0.$$
\textbf{Case 2.} Suppose that $j\in A(r_{\ast},s)\cap A(r'_{\ast},s')$. So, by the assumption 2, $r_{\ast}(j)\vee s(j)=r'_{\ast}(j)\vee s'(j)$ and
\begin{align*}
   (r_{\ast}(j)-s'(j))_{+}&\leq (r_{\ast}(j)\vee s(j)-s'(j))_{+}\\
   &=(r'_{\ast}(j)\vee s'(j)-s'(j))_{+}\\
   &=(r'_{\ast}(j)-s'(j))_{+}.
\end{align*}
So, since $A(r_{\ast},s')=A(r_{\ast},s)$, we can deduce that 
$$(r_{\ast}-s')\1_{A(r_{\ast},s')}\leq(r'_{\ast}-s')\1_{A(r_{\ast},s')\cap A(r'_{\ast},s')}\leq (r'_{\ast}-s')\1_{ A(r'_{\ast},s')}.$$
But, $r_{\ast}'$ provides minimal $t(r_{\ast}',s')$, we therefore conclude that
$$(r_{\ast}-s')\1_{A(r_{\ast},s)}=(r_{\ast}-s')\1_{A(r_{\ast},s')}=(r'_{\ast}-s')\1_{A(r'_{\ast},s')}.$$
\end{proof}
\begin{coro}\label{wniosek1}
Fix a positive integer $t$, $z\in\ccM_{Cd+t}$ and bad $s\in\ccM_{Cd}$ such that $s\leq z$. Then, for any $\tilde{j}\geq t$
\begin{equation}\label{jpot}
    |\{(r_{\ast}-s)\1_{A(r_{\ast},s)}: r\in\ccF,\; j(r_{\ast},s)=\tilde{j},\; z=s+(r_{\ast}-s)\1_{A(r_{\ast},s)},\; t(r_{\ast},s)=t\}|\leq\binom{\tilde{j}}{t}.
\end{equation}
\end{coro}
\begin{proof}
    Consider any pair $(r,s)$ satisfying \eqref{jpot} where $s\in\ccM_{Cd}$ is bad and any other pair $(r',s')$ satisfying \eqref{jpot} where $s'\in\ccM_{Cd}$ is bad. By Lemma \ref{lemat 2} we have 
    $$(r_{\ast}'-s')\1_{A(r_{\ast}',s')}=(r_{\ast}-s')\1_{A(r_{\ast},s)}\leq r_{\ast}\1_{A(r_{\ast},s)},$$
so there are at most $\binom{\tilde{j}}{t}$ such multisets.

\end{proof}
\begin{proof}[Proof of Lemma \ref{key2}]
We are ready for the final computation. The strategy is the same as in the proof of Lemma \ref{zly}. We want to replace bad $s\in\ccM_{Cd}$ by suitably chosen $z\in\ccM_{Cd+t}$. Fix $\tilde{j}, t, z$ from Lemma \ref{lemat 1}. Consider $w(r',s'):=(r_{\ast}'-s')\1_{A(r_{\ast}',s')}$ and 
$w(r,s)=(r_{\ast}-s)\1_{A(r_{\ast},s)}$ for which the parameters $\tilde{j},t,z$ are the same. Fix a single $r(\tilde{j},t,z):=r_{\ast}1_{A(r_{\ast},s)}\leq z$. By Corollary \ref{wniosek1} we know that $w(r',s')\leq r(\tilde{j},t,z)$. 
Note that $s=z-w(r_{\ast},s)$.
We use the notation $|w|:=\sum_{j=1}^n w(j)$ for any multiset $w$. Recall the cover $\ccG(s)$ (see \eqref{cover1}) of $\ccF$. Since we assume that $\ccF$ is not small, we have for any bad $s\in\ccM_{Cd}$ that $\sum_{w(r,s)\in\ccG(s)}A(w(r,s))\geq \frac{1}{2}$ (recall Definition \ref{A(w)}). We can therefore write
$$\frac{1}{2}\P(S_{Cd}\;\;\mbox{is bad})=\frac{1}{2}\sum_{\substack{s\in\ccM_{Cd} \\s\; \textrm{is bad}}}\P(S_{Cd}=s)\leq \sum_{\substack{s\in\ccM_{Cd} \\s\; \textrm{is bad}}}\P(S_{Cd}=s)\sum_{w(r,s)\in \ccG(s)}A(w(r,s)).$$
The crucial step, which is a result of the above discussion and Remark \ref{j>t} is that we can write
\begin{multline*}
 \sum_{\substack{s\in\ccM_{Cd} \\s\; \textrm{is bad}}}\P(S_{Cd}=s)\sum_{w(r,s)\in \ccG(s)}A(w(r,s))\\
 \leq\sum_{\tilde{j}\geq1} \sum_{(1-c)\tilde{j}\leq t\leq \tilde{j}} \sum_{z\in\ccM_{Cd+t}}\sum_{\substack{w\leq  r(\tilde{j},t,z)\\ |w|=t}} \P(S_{Cd}=z-w)A(w).   
\end{multline*}
Now, by \eqref{defmulti}, Definition \ref{A(w)} and since $w\leq z$ we get 

\begin{align*}
\P(S_{Cd}=z-w)A(w)&=\frac{(Cd)!\prod^{n}_{j=1}p(j)^{z(j)-w(j)}}{\prod^n_{j=1} (z(j)-w(j))!} \frac{(ed)^t\prod^n_{j=1}p(j)^{w(j)}}{\prod^n_{j=1}w(j)!}\\
&=(Cd)!(ed)^t\prod_{j=1}^n\frac{p(j)^{z(j)}}{(z(j)-w(j))!w(j)!},
\end{align*}
where we used that $\sum_{j=1}^n w(j)=t$. Now, since $\P(S_{Cd+t}=z)=(Cd+t)!\prod_{j=1}^n\frac{p(j)^{z(j)}}{z(j)!}$, we have
$$\prod_{j=1}^n\frac{p(j)^{z(j)}}{(z(j)-w(j))!w(j)!}=\frac{1}{(Cd+t)!}\P(S_{Cd+t}=z)\prod_{j=1}^n\binom{z(j)}{w(j)}.$$
Because of \eqref{epsilony}, we can bound 
$$\sum_{j\in A(r_{\ast},s)}z(j)=\sum_{j\in A(r_{\ast},s)}s(j)\vee r_{\ast}(j)=\sum_{j\in A(r_{\ast},s)}(s(j)+(r_{\ast}(j)-s(j))_{+})\leq c\tilde{j}+t\leq \tilde{j}+t$$
since $c=\frac{1}{2}$. So, using the fact that $w(j)=0$ for $j\notin A(r_{\ast},s)$,
\[
\prod^n_{j=1}\binom{z(j)}{w(j)} =\prod_{j\in A(r_{\ast},s)}\binom{z(j)}{w(j)} \leq \prod_{j\in A(r_{\ast},s)}2^{z(j)}\leq2^{\tilde{j}+t}.
\]
So, we have
\begin{align*}
    &\sum_{\tilde{j}\geq1} \sum_{(1-c)\tilde{j}\leq t\leq \tilde{j}} \sum_{z\in\ccM_{Cd+t}}\sum_{\substack{w\leq  r(\tilde{j},t,z)\\ |w|=t}} \P(S_{Cd}=z-w)A(w)\\
    &\leq \sum_{\tilde{j}\geq1} \sum_{(1-c)\tilde{j}\leq t\leq \tilde{j}} \sum_{z\in\ccM_{Cd+t}}\sum_{\substack{w
    \leq  r(\tilde{j},t,z)\\ |w|=t}} 2^{\tilde{j}+t}(ed)^t \frac{(Cd)!}{(Cd+t)!}\P(S_{Cd+t}=z).
\end{align*}
Now, by Corollary \ref{wniosek1}, we have that
\[   
\sum_{\substack{w\leq  r(\tilde{j},t,z)\\
|w|=t}}1\leq\binom{\tilde{j}}{t},
\]
which together with the fact that $\frac{(Cd)!}{(Cd+t)!}\leq (Cd)^{-t}$ gives
\begin{align*}
    &\sum_{\tilde{j}\geq1} \sum_{(1-c)\tilde{j}\leq t\leq \tilde{j}} \sum_{z\in\ccM_{Cd+t}}\sum_{\substack{w
    \leq  r(\tilde{j},t,z)\\ |w|=t}} 2^{\tilde{j}+t}(ed)^t \frac{(Cd)!}{(Cd+t)!}\P(S_{Cd+t}=z)\\
    &\leq \sum_{\tilde{j}\geq1} \sum_{(1-c)\tilde{j}\leq t\leq \tilde{j}} \sum_{z\in\ccM_{Cd+t}} \binom{\tilde{j}}{t} 2^{\tilde{j}+t}(ed)^t (Cd)^{-t} \P(S_{Cd+t}=z)\\
    &=\sum_{\tilde{j}\geq1} \sum_{(1-c)\tilde{j}\leq t\leq \tilde{j}} \binom{\tilde{j}}{t} 2^{\tilde{j}+t}\rbr{e/C}^t,
\end{align*}
where the last equality follows since we are summing up the probability mass function of $S_{Cd+t}$. Now, by substituting $\bar{t}:=\tilde{j}-t$, we get
 \begin{align*}
 &\sum_{\tilde{j}\geq1} \sum_{(1-c)\tilde{j}\leq t\leq \tilde{j}} \binom{\tilde{j}}{t} 2^{\tilde{j}+t}\rbr{e/C}^t= \sum_{\tilde{j}\geq1} 2^{\tilde{j}}  2^{\tilde{j}}  (e/C)^{\tilde{j}} \sum_{0\leq t\leq c\tilde{j}} \binom{\tilde{j}}{\bar{t}} 2^{-\bar{t}}(e/C)^{-\bar{t}}.
 \end{align*}
We bound $\binom{\tilde{j}}{\bar{t}}\leq 2^{\tilde{j}}$, $2^{-\bar{t}}\leq 1$ and
$$\sum_{0\leq\bar{t}\leq c\tilde{j}}(e/C)^{-\bar{t}}\leq\frac{(C/e)^{c\tilde{j}+1}}{C/e-1}.$$
Hence,
\begin{align*}
&\sum_{\tilde{j}\geq1} 2^{\tilde{j}}  2^{\tilde{j}}  (e/C)^{\tilde{j}} \sum_{0\leq\bar{t}\leq c\tilde{j}} \binom{\tilde{j}}{\bar{t}} 2^{-\bar{t}}(e/C)^{-\bar{t}}\\
&\leq \frac{C/e}{C/e-1}\sum_{\tilde{j}\geq1} 8^{\tilde{j}} (e/C)^{(1-c)\tilde{j}}.
 \end{align*}  
Finally, for $C=[41^2e]$  and since $c=1/2$ we get
\begin{align*}
& \frac{1}{2}\P(S_{Cd}\;\;\mbox{is bad})\leq\frac{41^2}{41^2-2}\cdot\frac{8}{\sqrt{41^2-1}-8}\leq\frac{1}{4}.
\end{align*}
Hence,
\[
\P(S_{Cd}\;\;\mbox{is bad})\leq \frac{1}{2}.
\]
\end{proof}
\section*{Acknowledgements}
The authors would like to thank the organizers of the "Workshop in Convexity and High-Dimensional Probability" held on May 23-- 27, 2022 at Georgia Institute of Technology at which some progress on this paper was made.\\
We are very grateful to anonymous referees for the careful reading of the initial version of this paper and pointing out all the needed clarifications.

\end{document}